\newtheorem{theorem}{Theorem}[section]
\newtheorem{lemma}[theorem]{Lemma}
\newtheorem{proposition}[theorem]{Proposition}
\newtheorem{conjecture}[theorem]{Conjecture}
\theoremstyle{remark}
\def\QSet{\mbox{\rm\kern.24em
\vrule width.03em height1.48ex depth-.051ex \kern-.26em Q}}
\def\R{{\mathbb R}}
\def\N{{\mathbb N}}
\def\C{{\mathbb C}}
\def\Q{{\mathbb Q}}
\def\Z{{\mathbb Z}}
\def\be#1{\begin{equation}\label{#1}}
\def\bas{\begin{align*}}
\def\eas{\end{align*}}
\def\bi{\begin{itemize}}
\def\ei{\end{itemize}}
\newenvironment{proof}{\noindent {\bf Proof} }{\endprf\par}
\def \endprf{\hfill  {\vrule height6pt width6pt depth0pt}\medskip}
\def\emph#1{{\it #1}}
\begin{document}

\title[Linear independence of time frequency translates]{Linear independence of time frequency translates for special configurations}

\author{Ciprian Demeter}
\address{Department of Mathematics, Indiana University, 831 East 3rd St., Bloomington IN 47405}
\email{demeterc@@indiana.edu}

\keywords{}
\thanks{The  author is supported by a Sloan Research Fellowship and by NSF Grants DMS-0742740 and 0901208}
\thanks{ AMS subject classification: Primary 26A99; Secondary 11K70, 65Q20}
\begin{abstract}

We prove that for any 4 points in the plane that belong to 2 parallel lines, there is no linear dependence between the associated time-frequency translates of any nontrivial Schwartz function.
If mild Diophantine properties are satisfied, we also prove  linear independence in the category of $L^2(\R)$ functions.
\end{abstract}
\maketitle

\section{Introduction}
The following conjecture, known as the HRT conjecture appears in \cite{HRT}. See also \cite{H} for an ample discussion on the subject.

\begin{conjecture}
\label{cc1}
Let $(t_j,\xi_j)_{j=1}^n$ be $n\ge 2$ distinct points in the plane. Then there is no nontrivial $L^2$ function  $f:\R\to \C$ satisfying a  nontrivial linear dependence
$$\sum_{j=1}^nd_if(x+t_j)e^{2\pi i\xi_jx}=0,$$
for a.e. $x\in \R$.
\end{conjecture}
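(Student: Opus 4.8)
The plan is to exploit the large symmetry group of the relation together with an induction on $n$. First I would record the standard reductions: if $f\in L^2(\R)$ satisfies a nontrivial dependence $\sum_{j=1}^n d_j f(x+t_j)e^{2\pi i\xi_j x}=0$ for the points $(t_j,\xi_j)_{j=1}^n$, then so does $g=Tf$ for an explicitly transformed configuration, where $T$ ranges over translations, modulations by linear phases, dilations $f(x)\mapsto f(ax)$, the Fourier transform, and multiplication by a chirp $e^{\pi i bx^2}$. The last two generate the metaplectic representation, and together these moves realize the action of the affine symplectic group of $\R^2$ on the point set $\{(t_j,\xi_j)\}$. Hence the conjecture depends only on the affine-symplectic orbit of the configuration, and one may always normalize: translate and dilate (after a symplectic move making two of the times agree) so that $(t_1,\xi_1)=(0,0)$ and $(t_2,\xi_2)=(0,1)$, and if $n\ge 3$ normalize a third point as well. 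This cuts the effective number of parameters from $2n$ down to $2n-6$.

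Next I would dispose of the small cases. For $n=2$ the normalized relation reads $f(x)\bigl(d_1+d_2e^{2\pi i x}\bigr)=0$, whose second factor vanishes only on a discrete set, forcing $f\equiv 0$; a direct iteration argument handles the non-normalized case just as easily. For $n=3$, after the symplectic normalization the three points lie either on a common line or on a common lattice, and in each case the result is known — Linnell's von Neumann algebra argument covers the lattice case, and a direct Fourier-analytic computation covers the collinear one. The genuinely new low-dimensional phenomena begin at $n=4$, where only special geometric configurations are accessible; the strategy I would pursue there is to use the residual symmetry to place the four points in a one-parameter family and then run a quantitative time-frequency argument over a carefully chosen family of tiles, controlling the mass of $f$ on the relevant phase-space regions.

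For general $n$ the plan would be an inductive step: assuming the conjecture for $n-1$ points and given a relation with every $d_j\ne 0$, apply a one-parameter family of symplectic moves, differentiate in the parameter (or subtract suitably translated and modulated copies of the relation from itself), and attempt to cancel a single term, thereby contradicting the inductive hypothesis. The hard part — and precisely why the conjecture remains open — is that this elimination is not available: the noncommutative algebra generated by translations and modulations does not telescope, and the bare $L^2$ hypothesis supports no analyticity or unique-continuation bootstrap of the kind that succeeds when $f$ is assumed to decay. I therefore expect that this line of attack, pushed as far as it will go, yields the conjecture only under supplementary assumptions — Schwartz regularity of $f$, or mild Diophantine conditions on $(t_j,\xi_j)$, as announced in the abstract — while the unrestricted statement stays out of reach.
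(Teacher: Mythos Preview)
The statement you were asked to prove is a \emph{conjecture}, not a theorem: the paper states it as Conjecture~\ref{cc1} (the HRT conjecture) and does not claim to prove it in general. There is therefore no ``paper's own proof'' to compare against. What the paper actually establishes are the partial results Theorems~\ref{t:main}, \ref{TT2}, and \ref{TT2'}, which settle the conjecture only for four-point configurations sitting on two parallel lines, under either Schwartz regularity or Diophantine hypotheses.

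Your proposal is not a proof, and to your credit you say so in the final paragraph: the inductive elimination step --- differentiate in a symplectic parameter, or subtract shifted copies, to cancel one term --- simply does not go through, because the Heisenberg commutation relations prevent any clean telescoping, and $L^2$ alone gives no analytic continuation or decay to exploit. So what you have written is an accurate survey of the known reductions (metaplectic invariance, the lattice case via Linnell, $n\le 3$), followed by a candid acknowledgement that the general case is out of reach. That is a fair summary of the state of the art, but it is not a proof attempt in any meaningful sense.

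If the intended target was one of the theorems the paper \emph{does} prove, note that the paper's methods are rather different from the tile-based ``quantitative time-frequency argument'' you gesture at for $n=4$. The paper's approach is number-theoretic: after the metaplectic reduction to a special $(1,3)$ or $(2,2)$ configuration, the linear dependence becomes a multiplicative recurrence $f(x+1)=f(x)P(x)$ along integer orbits, and the argument proceeds by comparing products $\prod_{n} P(x+n)$ along two carefully chosen orbits $x$ and $x-\{P_k\}$, where $P_k$ is an almost-period built from simultaneous Diophantine approximation of $\alpha$ and $\beta$. The key technical input is control of $\sum_n |P(x+n)|^{-1}$ outside a small exceptional set (Lemmas~\ref{l7} and \ref{l7'}), which in turn rests on counting how the points $(\{n\alpha\},\{n\beta\})$ distribute near the zero set of a two-variable trigonometric polynomial. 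None of this appears in your outline.
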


The  Conjecture was proved  when  $(t_i,\xi_i)_{i=1}^n$ sit on a lattice \cite{Lin}. See also \cite{BS}, \cite{DG}, for alternative arguments. In particular, this is the case with any 3 points. The conjecture also follows trivially when all points are collinear. No other cases seem to appear in the literature.
The following weaker conjecture has also been circulated (see for example \cite{H}).

\begin{conjecture}
\label{c2}
Let $(t_j,\xi_j)_{j=1}^n$ be $n\ge 2$ distinct points in the plane. Then there is no nontrivial Schwartz function  $f:\R\to \C$ satisfying a  nontrivial linear dependence
$$\sum_{j=1}^nd_if(x+t_j)e^{2\pi i\xi_jx}=0,$$
for a.e. $x\in \R$.
\end{conjecture}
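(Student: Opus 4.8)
The final statement is the Schwartz form of the HRT conjecture in full generality, which is open; what follows is the line of attack I would pursue rather than a finished argument. The plan has three stages: collapse the configuration space to finitely many normal forms using the symmetries of the problem, transport the putative relation to the Bargmann--Fock space so that time--frequency translates become honest translates of a single entire function, and then prove a rigidity statement for the resulting finite difference identity with Gaussian coefficients.

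\textbf{Step 1: symmetry normalization.} The collection of configurations $(t_j,\xi_j)_{j=1}^n$ for which the conclusion holds is invariant under translations $(t,\xi)\mapsto (t+a,\xi)$, modulations $(t,\xi)\mapsto(t,\xi+b)$, the Fourier transform $(t,\xi)\mapsto(-\xi,t)$, dilations $(t,\xi)\mapsto(\lambda t,\lambda^{-1}\xi)$, and more generally under the metaplectic action of $SL_2(\R)$: each is realized by a unitary operator on $L^2(\R)$ that preserves the Schwartz class, and conjugating $\sum_j d_j f(x+t_j)e^{2\pi i\xi_j x}=0$ by it produces a relation of the same type for the transformed configuration. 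I would use this freedom to move one point to $(0,0)$ and, when $n\ge 3$, two further points to $(1,0)$ and $(0,1)$, reducing to a compact moduli space of ``shapes'' for each fixed $n$ (a single point for $n=3$).

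\textbf{Step 2: passage to entire functions.} Given a relation with $f$ Schwartz, take the short-time Fourier transform against a Gaussian, equivalently the Bargmann transform $F$ of $f$: an entire function with $|F(z)|\lesssim e^{\pi |z|^2/2}$ and rapid decay transverse to that growth, with $F\not\equiv 0$ iff $f\ne 0$. Under this transform, time--frequency translation by $(t_j,\xi_j)$ becomes multiplication by an explicit unimodular Gaussian composed with translation of the variable by $w_j=t_j+i\xi_j$. The relation becomes $\sum_j c_j(z)F(z-w_j)=0$ for Gaussian coefficients $c_j$, and the whole problem is reduced to showing that no nonzero entire function of this controlled growth can satisfy such an identity.

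\textbf{Step 3: rigidity of the difference relation.} When $\{w_j\}$ lies in a lattice this is Linnell's theorem; the natural route is to reprove it via the Zak transform and then extend it by quasi-periodizing $F$ over a lattice chosen to absorb as many of the $w_j$ as the configuration allows. Schwartz decay makes the periodization absolutely convergent, so the resulting identity lives on a finite-dimensional space of theta functions with no $L^2$ caveats, and the non-absorbed points force an overdetermined linear system whose coefficient matrix has identically vanishing determinant, which one tries to contradict by analyticity. For configurations admitting no partial periodization I would instead look locally, at a point where $F$ attains its maximal growth, and use the Taylor coefficients of $F$ together with the Gaussian weights to propagate a zero of infinite order, forcing $F\equiv 0$. \textbf{Main obstacle.} The serious difficulty is exactly the configurations with no concealed lattice or alignment structure --- points in genuinely general position with rationally independent coordinates --- for which Step 3 offers no periodicity, positivity, or monotonicity and the entire-function identity resists both global and local analysis; this is where the conjecture is still open. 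It is therefore natural to first establish the families with the most residual structure after Step 1, namely small $n$ or configurations whose points are concentrated on a few parallel lines, where the arithmetic of the coordinates can be injected into the periodization of Step 3; in the $L^2$ category that arithmetic input must be made explicit as Diophantine hypotheses on the configuration.
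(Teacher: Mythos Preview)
The statement is a conjecture, not a theorem; the paper does not prove it in general but only establishes it for $(1,3)$ and $(2,2)$ configurations (Theorem~\ref{t:main}). You correctly flag that the general case is open and that your outline is a strategy rather than a proof, so there is no error to point to --- but your route diverges sharply from what the paper actually does in the cases it settles.

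Your Step~1 coincides with the paper's first move: metaplectic transforms reduce any $(1,3)$ or $(2,2)$ configuration to a special form $(0,0),(1,0),(1,\alpha),(1,\beta)$ or $(0,0),(1,0),(0,\alpha),(1,\beta)$. After that the paths separate completely. You pass to Bargmann--Fock and seek rigidity for an entire-function difference equation; the paper stays on the real line and observes that the special form turns the linear dependence into a first-order recurrence $f(x+1)=P(x)f(x)$ (or $f(x+1)P(x)=f(x)Q(x)$) along each orbit $x+\Z$. The whole engine is then number-theoretic: one constructs almost-periods $P_k$ via simultaneous Diophantine approximation (Lemma~\ref{L1}), compares products $\prod_{n=0}^{[P_k]-1}P(x+n)$ along two carefully paired orbits (Proposition~\ref{P1}), and controls the error by bounding $\frac{1}{P_k}\sum_n |P(x+n)|^{-1}$ through a geometric analysis of how the points $(\{n\alpha\},\{n\beta\})$ distribute near the zero set of $C_0+C_1e(x)+C_2e(y)$ (Lemmas~\ref{L4}, \ref{l7}). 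Schwartz decay enters only at the very end, to contradict the lower bound $|f(x_k+[P_k])f(x_k'-[P_k])|\gtrsim P_k^{-L}$.

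Your closing remark --- that parallel-line configurations are where arithmetic can be injected, and that in the $L^2$ category this becomes explicit Diophantine hypotheses --- is exactly right and is precisely what Theorems~\ref{TT2} and~\ref{TT2'} do. But the injection happens through orbit recurrences and equidistribution estimates, not through periodizing a Fock-space entire function; your Step~3 periodization would absorb at most two of the four points in the non-lattice $(1,3)$ case and does not visibly lead to the kind of averaged product comparison that drives the paper's argument.
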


 In light of the discussion above, this conjecture also follows  for the lattice and when the points are collinear. No other result seems to have appeared in the direction of this conjecture.

 We will call an $(n,m)$ configuration, any collection of $n+m$ distinct points in the plane, such that there exist 2 distinct parallel lines such that one of them contains exactly $n$ of the points, and the other one contains exactly $m$ of the points.
Our main results are:
\begin{theorem}
\label{t:main}
Conjecture \ref{c2} holds for all $(1,3)$ and $(2,2)$ configurations.

\end{theorem}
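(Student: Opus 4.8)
I would use the symmetries of the problem to normalize the configuration and reduce to a clean differential/functional equation. The group generated by translations $f\mapsto f(\cdot+t)$ and modulations $f\mapsto e^{2\pi i\xi\cdot}f$ acts on the set of configurations (up to multiplying $f$ by a unimodular character and by a fixed exponential, which preserves the Schwartz class and the linear dependence relation). Using these, together with the dilation symmetry $f\mapsto f(\lambda\cdot)$, a metaplectic rotation, and relabeling, one can move any two parallel lines to be horizontal, say $\xi=0$ and $\xi=1$. So I would assume the points are $(t_j,0)$ for $j$ in one block and $(s_k,1)$ for $k$ in the other block, with the $t_j$ distinct and the $s_k$ distinct. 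Multiplying through and translating, one can further take $0$ to be among the $t_j$'s.

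The $(1,3)$ case then reads: after normalization, there are constants $a,b,c,d$ (not all zero) and distinct reals with
$$
a f(x) + \bigl(b f(x+p) + c f(x+q) + d f(x+r)\bigr)e^{2\pi i x} = 0 \quad \text{a.e.}
$$
I would first dispose of degenerate sub-cases (some coefficients vanishing), where the relation collapses to one supported on $\le 3$ points or on a single line, already covered by the lattice/collinear results. In the generic sub-case, the key trick is the standard one for HRT-type problems: compare the relation with its translate. Replacing $x$ by $x+1$ (a period of $e^{2\pi ix}$) gives a second relation with the same exponential factor; eliminating the $e^{2\pi ix}$ term between the two produces a relation involving only translates of $f$ — i.e., a relation supported on a single line — which by the collinear case forces $f\equiv 0$ unless the coefficients conspire. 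Tracking when they conspire pins down a finite-dimensional family, and there I would bring in the Schwartz hypothesis directly: take the Fourier transform, where translation becomes modulation and vice versa, to get a dual relation, and play the decay of $f$ against the decay of $\hat f$. Concretely, an equation like $af(x) = -g(x)e^{2\pi ix}$ with $g$ a finite combination of translates of $f$ forces, on the Fourier side, $a\hat f(\eta) = -\hat g(\eta-1)$ with $\hat g$ a finite combination of modulates of $\hat f$; iterating the shift-by-$1$ relation and summing yields a lacunary-type or periodic constraint that a nonzero Schwartz function cannot satisfy.

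For the $(2,2)$ case the normalized relation is
$$
\bigl(a f(x) + b f(x+p)\bigr) + \bigl(c f(x+q) + d f(x+r)\bigr)e^{2\pi i x} = 0,
$$
and I would run the same elimination: the shift $x\mapsto x+1$ plus linear elimination of the modulated block leaves a length-$\le 4$ relation on a single line, hence (collinear case) either trivial or with coefficients forced into a subvariety; on that subvariety one gets a two-term recurrence $u(x+p')=\chi(x)u(x-p'')$ for a Schwartz function, which again is impossible by matching growth of $f$ and $\hat f$. The main obstacle I anticipate is the bookkeeping in the borderline coefficient configurations — after eliminating the exponential one does not always get an outright contradiction, and the residual relations form several families that must each be shown incompatible with rapid decay; getting a uniform, clean argument (rather than a long case split) for these borderline families is where the real work lies, and is presumably where the Schwartz hypothesis is genuinely used rather than just $L^2$.
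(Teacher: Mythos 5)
Your plan diverges from the paper's at the normalization step, and that divergence is fatal. You rotate the two parallel lines to be \emph{horizontal} ($\xi=0$ and $\xi=1$), which leaves several distinct time-translates on the second line and yields a relation of the form $af(x)+g(x)e^{2\pi ix}=0$ with $g(x)=bf(x+p)+cf(x+q)+df(x+r)$. The paper instead rotates the lines to be \emph{vertical} (time coordinates $0$ and $1$), so that the relation becomes a genuine first-order recurrence $f(x+1)=f(x)P(x)$ with $P$ a three-term trigonometric polynomial. That recurrence, and the product formula $f(x+n)=f(x)\prod_{k=0}^{n-1}P(x+k)$ it produces along integer orbits, is the structural backbone of the entire argument; your normalization does not give one.

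The central step of your plan is also incorrect as stated. Shifting $x\mapsto x+1$ in $af(x)=-g(x)e^{2\pi ix}$ and ``eliminating'' the exponential gives $f(x+1)g(x)=f(x)g(x+1)$, which is a \emph{quadratic} (bilinear) relation among translates of $f$, not a linear one. The collinear/lattice results from \cite{Lin} concern linear dependence among time-frequency translates; they say nothing about such bilinear identities, so you cannot invoke them to conclude $f\equiv 0$ ``unless the coefficients conspire.'' Nothing in the remainder of the sketch (passing to the Fourier side, ``lacunary-type or periodic constraint'') is made precise enough to close this gap, and the place you expect the real work to lie (``bookkeeping in borderline coefficient configurations'') is not where the actual difficulty is.

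By contrast, the paper's proof is almost entirely number-theoretic. After obtaining $f(x+1)=f(x)P(x)$, it uses simultaneous Diophantine approximation (Lemma \ref{L1}) to build almost-periods $P_k$ of $P$, a local lower bound for $|P|$ near its zeros (Lemma \ref{L4}), and an averaged estimate on $\frac{1}{P_kM_k}\sum_n 1/|P(x+n)|$ outside a small exceptional set (Lemma \ref{l7}). It then compares the products $\prod P(x+n)$ along two orbits whose generators differ by $P_k$ (Proposition \ref{P1}) and derives a contradiction with Schwartz decay at $\pm\infty$. None of these ingredients appears in your proposal. So this is not a different-but-valid route: the pivotal elimination step fails, and the remaining steps are too vague to replace the missing Diophantine machinery.
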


Let $\|x\|$ denote the distance of $x$ to the nearest integer.

\begin{theorem}
\label{TT2}
Conjecture \ref{cc1} holds for  special $(1,3)$ configurations $$(0,0),(1,0), (1,\alpha), (1,\beta)$$

(a) if there exists $\gamma>1$ such that
$$\liminf_{n\to\infty}n^{\gamma}\min\{\|n\frac{\beta}{\alpha}\|,\|n\frac{\alpha}{\beta}\|\}<\infty$$

(b)  if at least one of $\alpha,\beta$ is rational

In either case, no nontrivial solution $f$ can exist satisfying  minimal decay
$$\lim_{|n|\to\infty\atop{n\in\Z}}|f(x+n)|=0,\;\;a.e.\;x$$
\end{theorem}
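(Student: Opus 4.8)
The plan is to convert the dependence into a multiplicative recursion for $f$ and then play the growth that ``minimal decay'' forces on products of values of $g$ against the equidistribution of the frequencies $\alpha,\beta$ along the integers. First I would normalize. Since $(0,0)$ is not collinear with $(1,0),(1,\alpha),(1,\beta)$, its coefficient $d_1$ must be nonzero — otherwise the three collinear terms give a dependence in which $f(x+1)$ is multiplied by a nonzero trigonometric polynomial, whose zero set is null, forcing $f\equiv 0$. Dividing by $d_1$ the dependence becomes $f(x)=-g(x)f(x+1)$ a.e., with $g(x)=d_2+d_3e^{2\pi i\alpha x}+d_4e^{2\pi i\beta x}$. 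If two of $d_2,d_3,d_4$ vanish, $|g|$ is constant and $|f(x+n)|$ is a one–sided geometric sequence, incompatible with decay as $n\to+\infty$ and as $n\to-\infty$ simultaneously; if exactly one of $d_3,d_4$ vanishes we face a three–point configuration and invoke the known three–point case. So assume $g$ is a nonconstant real–analytic almost periodic trigonometric sum; in particular $\{g=0\}$ is locally finite, hence null.

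For a.e.\ $x$ we have $g(x+k)\neq 0$ for every $k\in\Z$, and then running the recursion in both directions gives the dichotomy: either $f(x+n)=0$ for all $n\in\Z$, or $f(x+n)\neq 0$ for all $n\in\Z$. Let $B$ be the measurable, $\Z$–invariant set of $x$ of the second kind; if $f\not\equiv 0$ then $|B|>0$, which I will contradict. For $x\in B$, iterating the recursion gives $\log|f(x+n)|=\log|f(x)|-\sum_{k=0}^{n-1}\log|g(x+k)|$ together with the backward identity, so $\lim_{|n|\to\infty}|f(x+n)|=0$ forces, for a.e.\ $x\in B$,
\[
\sum_{k=0}^{n-1}\log|g(x+k)|\to+\infty,\qquad \sum_{k=1}^{m}\log|g(x-k)|\to-\infty .
\]
Subtracting these, the condition becomes
\[
\sum_{k=0}^{n-1}\Big(\log|g(x+k)|-\log|g(x-1-k)|\Big)\longrightarrow+\infty ,
\]
i.e.\ the forward Birkhoff sum of $\log|g|$ along $x+\Z$ minus its backward Birkhoff sum must tend to $+\infty$.

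Now Jensen's formula identifies the logarithmic average of $|g|$ as a Mahler measure $M$ — for instance $M=\int_{\T^2}\log|d_2+d_3e^{2\pi iu}+d_4e^{2\pi iv}|\,du\,dv$ when $1,\alpha,\beta$ are rationally independent, and an integral over the appropriate subtorus otherwise — and Weyl's theorem gives that both the forward and the backward Birkhoff sums equal $Mn+o(n)$, so the difference above is $o(n)$: consistent so far, and in particular $M\neq0$ alone does \emph{not} close the argument. The content is that the difference is actually \emph{bounded}, contradicting its tending to $+\infty$. If $\log|g|-M$ were a \emph{continuous} coboundary $\mathcal G(\theta+v)-\mathcal G(\theta)$ for the relevant torus rotation by $v=(\alpha,\beta)$, the difference would telescope to $\mathcal G(\theta+nv)+\mathcal G(\theta-nv)-2\mathcal G(\theta)$, which is bounded; the obstruction is that $\log|g|$ has logarithmic singularities on $\{g=0\}$, so the cohomological equation need not admit a bounded solution, and the orbit of $\theta$ could in principle approach $\{g=0\}$ fast enough to spoil boundedness. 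Controlling this is where the Diophantine hypotheses enter and is, I expect, the main difficulty.

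Here is how (a) and (b) should supply what is needed. If both $\alpha,\beta$ are rational then $g$ is periodic of some integer period $Q$; then $\prod_{k=0}^{Q-1}g(\,\cdot+k\,)$ is $1$–periodic, the sequence $k\mapsto\log|g(x+k)|-\log|g(x-1-k)|$ is $Q$–periodic of mean zero, and its partial sums are bounded outright — contradiction. If exactly one of them, say $\alpha=p/q$, is rational, then $g(\cdot+q)$ differs from $g$ only by a rotation of $d_4$, and the sums regroup into Birkhoff sums, over the circle rotation by $\beta q$, of a mean–zero function whose only singularities are logarithmic; one bounds these by combining the (soft) equidistribution of that rotation with Jensen's formula applied to the resulting univariate polynomial. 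Under hypothesis (a), the good rational approximations $n_j(\beta/\alpha)\approx m_j$ (or the symmetric variant) with error $O(n_j^{-\gamma})$, $\gamma>1$, let one approximate the $v$–orbit by a periodic orbit of period $\sim n_j$ over blocks of length up to $\sim n_j^{\gamma}$, on each of which the product of the values of $g$ is comparable — again via Jensen — to a genuine periodic product; summing the nearly telescoping estimates over consecutive blocks keeps the difference bounded. In every case the technical heart is estimating the contribution of the near–zeros of $g$, equivalently bounding how closely the orbit approaches the finite set $\{g=0\}$, and this is precisely what the quantitative arithmetic in (a) and the exact rationality in (b) are there to control.
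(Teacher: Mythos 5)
Your initial reductions are correct, and your framing of the obstruction is genuinely illuminating, but the route you sketch to close the argument diverges from the paper's in a way that matters, and the step you flag as "the technical heart" is exactly where the plan, as written, would fail.

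You reduce the problem to showing that the quantity $F(n,x):=\sum_{k=0}^{n-1}\log|g(x+k)|-\sum_{k=1}^{n}\log|g(x-k)|$ cannot tend to $+\infty$, for a.e.\ $x$ in the support set $B$; the coboundary heuristic then suggests $F(n,x)$ should in fact be bounded. But the paper does not attempt to bound $F(n,x)$ along a \emph{single} orbit for a \emph{fixed} $x$ over \emph{all} $n$, and there is a genuine reason it does not: for a fixed $x$, the orbit $(\{\alpha(x+n)\},\{\beta(x+n)\})$ visits any neighborhood of the zero set of $p$ infinitely often, and the resulting logarithmic singularities are not summable. Concretely, the key mean-value estimate in the paper (Lemma~\ref{l7'}) controls $\frac{1}{N^\gamma}\sum_{n=0}^{N-1}|P(x+n)|^{-1}$ only for $x$ outside an exceptional set $E_{N,\delta}$ whose \emph{location depends on $N$}; a fixed $x$ cannot be kept out of all of them without pushing $\delta\to0$, which inflates the implicit constants. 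Your proposed remedy — ``summing the nearly telescoping estimates over consecutive blocks keeps the difference bounded'' — is precisely the step that does not follow: the per-block errors are $O(1)$ at best and would have to be summable over the infinite sequence of blocks, which is not what the Diophantine hypothesis gives you.

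What the paper does instead is work with \emph{two} distinct $\Z$-orbits and a \emph{single} block. Using simultaneous Diophantine approximation (Lemma~\ref{L1}) it produces a non-integer near-period $P_k=N_k/\beta$ with $\{P_k\}$ small and $\|P_k\alpha\|,\|P_k\beta\|\lesssim P_k^{-\gamma}$, and then compares $\prod_{n=0}^{[P_k]-1}P(x_k+n)$ with $\prod_{n=0}^{[P_k]-1}P(x_k-P_k+n)=\prod_{n=1}^{[P_k]}P(x_k'-n)$, where $x_k'=x_k-\{P_k\}$. Crucially, the base point $x_k$ is allowed to \emph{vary with $k$}: it is chosen in $S\setminus E_{[P_k],\delta}$ with $x_k-\{P_k\}\in S$ as well, which is possible because $|S|>0$ while $|E_{[P_k],\delta}|<\delta$. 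The comparison factor for the products is then bounded by Proposition~\ref{P1} (or by an $O(1)$ factor in the $L^2$ setting), giving $|f(x_k+[P_k])\,f(x_k'-[P_k])|\gtrsim 1$ and a contradiction with decay as $k\to\infty$. The ingredients are a Taylor/triangle-geometry lower bound $|p(x,y)|$ near its at most two zeros (Lemma~\ref{L4}), a counting estimate for the points $n\alpha/\beta$ in shrinking $\|\cdot\|$-balls quantified by the $\gamma+\epsilon$ exponent (Lemma~\ref{l7hrstmdncvjrhufherw;'q,.c}), and the mean-value bound (Lemma~\ref{l7'}), none of which pass through Jensen's formula or Mahler measure. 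Your Mahler-measure/Weyl normalisation is a clean way to see why no cheap rate argument works, but it does not enter the actual proof.

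For part~(b) the paper's argument is also simpler than what you sketch: if (say) $\alpha=p/q$ is rational, then $\{\alpha(x+n)\}$ takes only $q$ values, so for $x$ outside a set of measure $<\delta$ one has $\|\alpha(x+n)-\gamma_1\|\gtrsim_\delta 1$ uniformly in $n$, hence $|P(x+n)|\gtrsim_\delta 1$ directly from Lemma~\ref{L4}; the two-orbit comparison then goes through with $\gamma=1$. There is no need to ``regroup into Birkhoff sums over the circle rotation by $\beta q$'' or to split into ``both rational'' versus ``exactly one rational'' subcases. In short: your single-orbit, all-scales, coboundary/Jensen plan identifies the right phenomenon but is blocked at the very step you flag as central; the paper's two-orbit, single-block, moving-base-point construction is how that obstruction is actually avoided.
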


\begin{theorem}
\label{TT2'}
Conjecture \ref{cc1} holds  for special $(2,2)$ configurations $$(0,0),(1,0), (0,\alpha), (1,\beta)$$

(a) if
$$\liminf_{n\to\infty}n\log n\min\{\|n\frac{\beta}{\alpha}\|,\|n\frac{\alpha}{\beta}\|\}<\infty$$

(b)  if at least one of $\alpha,\beta$ is rational

In either case, no nontrivial solution $f$ can exist satisfying  minimal decay
$$\lim_{|n|\to\infty\atop{n\in\Z}}|f(x+n)|=0,\;\;a.e.\;x$$
\end{theorem}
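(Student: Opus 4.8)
The plan is to turn the dependence into a scalar difference equation and follow $|f|$ along $x+\Z$. Since Conjecture~\ref{cc1} holds for $\le 3$ points, and (when $\alpha/\beta\in\Q$) for points on a lattice by \cite{Lin}, we may assume every $d_j\ne 0$ and that $\alpha/\beta$ is irrational. With the points in the stated order the dependence reads $A(x)f(x)+B(x)f(x+1)=0$ a.e., where $A(x)=d_1+d_3e^{2\pi i\alpha x}$ and $B(x)=d_2+d_4e^{2\pi i\beta x}$. Each of $A,B$ vanishes only on a discrete set, so the zero set of $f$ is $1$-periodic up to null sets; hence $f\equiv 0$ or $f\ne 0$ a.e., and we assume the latter. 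For a.e.\ $x$ and $n\ge 0$,
\[
\log|f(x+n)|=\log|f(x)|+\Sigma_n(x),\qquad \Sigma_n(x):=\sum_{k=0}^{n-1}\bigl(\log|A(x+k)|-\log|B(x+k)|\bigr),
\]
with the mirror formula for $n<0$. By Jensen's formula $\int_0^1\log|A|=\log\max(|d_1|,|d_3|)=:m_1$ and $\int_0^1\log|B|=\log\max(|d_2|,|d_4|)=:m_2$; these are the two candidate drift rates of $\Sigma_n$.

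The case $m_1\ne m_2$ needs no arithmetic input. Weyl equidistribution (for an irrational frequency) gives $\tfrac1n\sum_{k<n}\log|A(x+k)|\to m_1$ for a.e.\ $x$ when $\alpha$ is irrational, while for $\alpha=p/q$ the identity $\prod_{k=0}^{q-1}A(x+k)=d_1^{\,q}+(-1)^{q+1}d_3^{\,q}e^{2\pi ipx}$ shows the limit is $\tfrac1q\log|\Pi(x)|$, $\Pi$ a non-constant trigonometric polynomial with $\int_0^1\tfrac1q\log|\Pi|=m_1$; in either case a monotone-truncation bound ($\log|B|$ replaced by $\max(\log|B|,-M)$, Weyl, $M\to\infty$, handling a possible zero of $B$) gives $\limsup_n\tfrac1n\sum_{k<n}\log|B(x+k)|\le m_2$ for every $\beta$. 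Hence on a set of $x$ of positive measure $\limsup_n\tfrac1n\Sigma_n(x)>0$ when $m_1>m_2$, and symmetrically $\limsup_n\tfrac1n\Sigma_{-n}(x)>0$ when $m_1<m_2$; either way $|f|$ is unbounded along $x+\Z$ on a positive-measure set, contradicting minimal decay, and Fatou applied to $\int_0^1|f(x+n)|^2\,dx=\int_0^1|f(x)|^2e^{2\Sigma_n(x)}\,dx$ rules out an $L^2$ solution. It remains to treat the resonant case $m_1=m_2=:m$.

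\emph{Part (b)} is the same argument in the resonant case. If $\alpha=p/q$ (then $\beta$ is irrational; $\beta$ rational is symmetric), the limit $\tfrac1q\log|\Pi(x)|$ of $\tfrac1n\sum_{k<n}\log|A(x+k)|$ is non-constant with mean $m_1=m$, so $E_+:=\{x:\tfrac1q\log|\Pi(x)|>m\}$ has positive measure; combined with $\limsup_n\tfrac1n\sum_{k<n}\log|B(x+k)|\le m$ this yields $\limsup_n\tfrac1n\Sigma_n(x)>0$ on $E_+$, hence $|f(x+n)|\to\infty$ along a subsequence for $x\in E_+$ (and $|f(x-n)|\to\infty$ on $E_-:=\{\tfrac1q\log|\Pi|<m\}$ by the mirror formula, $\{\tfrac1q\log|\Pi|=m\}$ being null) — impossible. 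No Diophantine condition on $\beta$ is needed because only a single subsequence is used, on which the logarithmic singularities of $\log|B|$ cost nothing.

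\emph{Part (a).} After swapping $\alpha\leftrightarrow\beta$ if needed, the hypothesis supplies $n_j\to\infty$ and $p_j$ coprime to $n_j$ with $|n_j\beta-p_j\alpha|\le C\alpha/(n_j\log n_j)$, so $e^{2\pi i m n_j\beta}=e^{2\pi i m p_j\alpha}\bigl(1+O(m/(n_j\log n_j))\bigr)$. Expanding $\log|A(x+k)|$ and $\log|B(x+k)|$ in geometric/logarithmic series about their dominant coefficients and summing in $k$ turns $\Sigma_{n_j}(x)$ into a sum over frequencies $m$ of terms built from $\frac{e^{2\pi i m n_j\alpha}-1}{e^{2\pi i m\alpha}-1}$ (from $A$) and $\frac{e^{2\pi i m n_j\beta}-1}{e^{2\pi i m\beta}-1}$ (from $B$); in the latter the numerator may be replaced by $e^{2\pi i m p_j\alpha}-1$ throughout the effective range $m\lesssim n_j\log n_j$, and with this replacement one checks the $B$-part is matched, up to $O_x(1)$ for a.e.\ $x$, by the $A$-part — the surplus factor $\log n_j$ in the hypothesis being exactly what lets the replacement reach the whole effective range, and (when $|d_2|=|d_4|$, so $B$ genuinely vanishes) what absorbs the contribution of the $O(1/\log n_j)$-density of indices $k<n_j$ with $x+k$ within $\lesssim 1/(n_j\log n_j)$ of a zero of $B$, each worth $O(\log n_j)$. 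Thus $\Sigma_{n_j}(x)\ge -C_x$ for a.e.\ $x$, so $|f(x+n_j)|\gtrsim_x|f(x)|\not\to 0$, and minimal decay (and, through the integrated identity above, an $L^2$ solution) cannot occur. I expect the crux to be exactly this resonant case of part (a), and within it the degenerate sub-cases $|d_1|=|d_3|$ or $|d_2|=|d_4|$, where $\Sigma_n$ is governed not by bounded quantities but by nearly cancelling logarithmic spikes; quantifying how small $\|n\beta/\alpha\|$ must be, along a subsequence, to force that cancellation is where the rate $n\log n$ here — and $n^{\gamma}$, $\gamma>1$, in Theorem~\ref{TT2} — enters.
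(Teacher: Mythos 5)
Your reductions, the Jensen/Birkhoff dichotomy into resonant ($m_1=m_2$) and non-resonant ($m_1\neq m_2$) cases, and all of part (b) are sound, and part (b) via Birkhoff's $L^1$ ergodic theorem plus Jensen's formula is a genuinely different and cleaner route than the paper, which instead uses a ``conjugates trick'' (a Birkhoff-selected pair of base points $x,z$ so that the $P$-factors along the forward orbit of $x$ are conjugate to those along the backward orbit of $z$) combined with the $r$-periodicity of $Q$. Your non-resonant observation also quietly dispatches a case the paper never isolates. (Minor: ``hence $f\equiv 0$ or $f\neq 0$ a.e.'' is an overstatement, since translation by $1$ on $\R$ is not ergodic; all you get, and all you use, is that $\{f\neq 0\}$ is $\Z$-invariant mod null sets, hence has positive trace on $[0,1)$.)

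The resonant case of part (a) with $\alpha,\beta$ both irrational is the crux, and here there is a genuine gap. You attempt a \emph{single-orbit} lower bound $\Sigma_{n_j}(x)\ge -C_x$ along integer shifts $n_j$, by Fourier-expanding $\log|A|,\log|B|$ and ``matching'' the $A$-sum against the $B$-sum after the substitution $e(mn_j\beta)\rightsquigarrow e(mp_j\alpha)$. But there is no mechanism for that matching: after the substitution the two sums are of the shape $\sum_m\frac{r_1^m}{m}\,\mathrm{Re}\Bigl(c_1(x,m)\frac{e(mn_j\alpha)-1}{e(m\alpha)-1}\Bigr)$ and $\sum_m\frac{r_2^m}{m}\,\mathrm{Re}\Bigl(c_2(x,m)\frac{e(mp_j\alpha)-1}{e(m\beta)-1}\Bigr)$, with unrelated amplitudes $r_1,r_2$, unrelated phases, unrelated denominators ($\|m\alpha\|$ versus $\|m\beta\|$), and even unrelated numerators ($n_j\alpha$ versus $p_j\alpha$); the hypothesis controls $\alpha/\beta$, not $\alpha$ or $\beta$ separately, so nothing forces the two series to cancel, and the paper itself warns that bounding products along an individual orbit ``seems nearly impossible.'' What the paper actually does is a \emph{two-orbit} comparison across the non-integer shift $P_k:=N_k/\beta$, with $N_k\log N_k\|N_k\alpha/\beta\|\lesssim 1$. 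Then $e(\beta(x-P_k))=e(\beta x)$ \emph{exactly} (since $\beta P_k=N_k\in\Z$) while $e(\alpha(x-P_k))$ differs from $e(\alpha x)$ by $O(1/(N_k\log N_k))$, so the length-$[P_k]$ products of $R=P/Q$ starting at $x-P_k$ and at $x$ differ only by a bounded factor for $x$ outside a small exceptional set, the exceptional set and the spent $\log N_k$ coming from an $L^1$-average bound on $\sum_n\frac{1}{|P(x+n)|}+\frac{1}{|Q(x+n)|}$ as in Lemma~\ref{l7}. This gives not $\Sigma_{n_j}(x)\ge -C_x$ but the weaker and sufficient $|f(x+[P_k])|\cdot|f(x-\{P_k\}-[P_k])|\gtrsim 1$, contradicting minimal decay at both ends. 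To repair your argument you would need to replace the single-orbit matching by this two-orbit comparison with the $1/\beta$-rescaled shift, or supply a real proof of the cancellation you assert; I do not believe the latter exists.
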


Unlike the approaches in \cite{HRT}, \cite{Lin}, \cite{BS}, \cite{DG}, the approach here is mostly number theoretical.
An old theorem of Khinchine guarantees that (Lebesgue) almost every $x$ satisfies $$\liminf_{n\to\infty}n\log n\|nx\|<\infty,$$
and thus (via metaplectic transforms) Conjecture \ref{cc1} holds for "almost every"  $(2,2)$ configuration. Note also that Theorem \ref{TT2}(b) answers Conjecture 9.2 (b) from \cite{H}.

Theorem \ref{t:main} is proved by first reducing to special configurations. This is done via applying the area preserving affine transformations -also called {\em metaplectic transforms}- of the plane (such as translations, rotations, shears, and area one rescalings). See Section 2 in \cite{HRT} for a discussion on this.

The key feature of any special $(n,m)$ configuration of points, is the fact that any linear dependence between the corresponding time frequency translates gives rise to a recurrence along $\Z$ orbits $x+\Z$. We use Diophantine approximation to identify appropriate {\em scales}. For each fixed scale, we investigate the recurrence along finite portions of two carefully chosen distinct orbits, with length comparable to the scale.

The (2-2) case is quite simple. The two rotations by $\alpha$ and $\beta$ do not interact strongly, and hence they can be dealt with by different methods. The two orbits are selected in such a way that the trigonometric polynomials associated with $\alpha$ take conjugate values along the two orbits. This means that in absolute value, the contributions coming from these polynomials are  identical, for the two orbits. We will refer to this as the {\em conjugates trick}. The contributions coming from the polynomials associated with $\beta$ are then compared via diophantine approximation and Riemann sums, if $\beta$ is irrational, and using a {\em periodicity argument}, if $\beta$ is rational.

The (1,3) case  is significantly harder, partly because the behavior of the relevant polynomial $p(x,y)=C_0+C_1e(x)+C_2e(y)$ near its zeros is more complicated.
Our argument  relies in part on the almost periodicity for trigonometric polynomials. This in turn has behind the existence of simultaneous approximants $P\in \R$ such that $$P\max\{\|P\alpha\|,\|P\beta\|\}\lesssim 1.$$
The almost periods we get become better as $\frac\alpha\beta$ gets less Diophantine (that is better approximable by rationals).
The main idea of this approach is to compare the products of polynomials along the two orbits via estimates for their arithmetic means (see for example Proposition \ref{P1}). While the products along a fixed orbit can fluctuate a lot, and thus are very difficult to control, proving upper bounds for their averages turns out to be a less complicated proposal.
This requires a deeper understanding of the geometry of the points $(\{n\alpha\},\{n\beta\})$, as observed in the proof of Lemma \ref{l7} and Lemma \ref{l7hrstmdncvjrhufherw;'q,.c}. The key is that the more Diophantine $\frac{\alpha}{\beta}$ is, the more "regular" is the counting measure of the points $(\{n\alpha\},\{n\beta\})$, and this will serve as a compensation.

\section{Proof of Theorem \ref{t:main} for (1,3) configurations}
 Define $[x]$, $\{x\}$, $\|x\|$  to be the integer part, the  fractional part and the distance to the nearest integer of $x$. Let $\langle x\rangle$ denote the unique number in $[-1/2,1/2)$ such that $x-\langle x\rangle$ is an integer. For two quantities $A$, $B$ that vary, we will denote by $A\lesssim B$ or $A=O(B)$ the fact that $A\le C B$ for some universal constant $C$, independent of $A$ and $B$.  In general, $A\lesssim_p B$ means that the implicit constant is allowed to depend on the parameter $p$. If no parameter is specified, the implicit constants are implicitly understood to depend on the (harmless) fundamental parameters introduced in the beginning of the proof of Theorem \ref{t:main}. For a set $A\subset \R$, we will denote by $|A|$ its Lebesgue measure, and if the set is finite, $|A|$ will represent its cardinality. Finally, we define
$e(x):=e^{2\pi i x}$.

We prove a few results that will contribute to the proof of Theorem \ref{t:main}.

\begin{lemma}
\label{L4}
Let $C_0,C_1,C_2\in \C$ be some nonzero $complex$ numbers. The polynomial $p(x,y)=C_0+C_1e(x)+C_2e(y)$ has at most two  real zeros $(\gamma_1^{(j)},\gamma_2^{(j)})\in [0,1)^2$, $j\in\{1,2\}$. There exists $t=t(C_0,C_1,C_2)\in \R\setminus 0$ such that
$$|p(x,y)|\gtrsim_{C_0,C_1,C_2}\min_{j}(\| x-\gamma_1^j+t\langle y-\gamma_2^j\rangle\|+\| x-\gamma_1^j\|^2+\| y-\gamma_2^j\|^2),$$
for each $x,y\in\R$.

\end{lemma}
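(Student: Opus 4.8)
The plan is to locate the zeros of $p$ exactly, analyse $|p|$ near each of them by a Taylor expansion, and finish with a crude compactness bound on the rest of the torus. For the count: with $x$ fixed, $p(x,y)=0$ is equivalent to $C_2e(y)=-(C_0+C_1e(x))$, which has a (necessarily unique) solution $y\in[0,1)$ exactly when $|C_0+C_1e(x)|=|C_2|$. As $x$ runs over $[0,1)$, the point $C_0+C_1e(x)$ runs injectively over the circle of radius $|C_1|>0$ about $C_0$, and the constraint picks out its intersection with the circle of radius $|C_2|>0$ about $0$; these two circles are distinct because $C_0\neq 0$, hence meet in at most two points, which yields the at-most-two zeros $(\gamma_1^{(j)},\gamma_2^{(j)})$. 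If $p$ has no zero in $[0,1)^2$, then $|p|$ is bounded below on $\R^2$ by a positive constant (periodicity and continuity) and the inequality is trivial, so I may assume at least one zero exists.

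Near a fixed zero $(\gamma_1,\gamma_2)$ I would set $A:=C_1e(\gamma_1)$ and $B:=C_2e(\gamma_2)$ (both nonzero) and $\sigma:=B/A$; since $A+B=-C_0\neq 0$ we get $\sigma\notin\{0,-1\}$. With $u:=\langle x-\gamma_1\rangle$ and $v:=\langle y-\gamma_2\rangle$, the $1$-periodicity of $e$ gives $p(x,y)=A(e(u)-1)+B(e(v)-1)$, while $\|x-\gamma_1\|=|u|$, $\|y-\gamma_2\|=|v|$ and $\|x-\gamma_1+t\langle y-\gamma_2\rangle\|=\|u+tv\|$. A short computation from $B=-(C_0+C_1e(\gamma_1))$ and the zero-condition $|C_0+C_1e(\gamma_1)|=|C_2|$ gives
$$\operatorname{Re}\sigma=\tau:=\frac{|C_0|^2-|C_1|^2-|C_2|^2}{2|C_1|^2},$$
which depends only on $C_0,C_1,C_2$; in particular both zeros share the same value of $\operatorname{Re}\sigma$. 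I then set $t:=\tau$ if $\tau\neq 0$, and $t:=1$ if $\tau=0$. (If some zero has $\sigma\in\R$ then $\sigma=\tau$, so $\tau\neq 0$ and $t=\sigma\notin\{0,-1\}$; if $\tau=0$ then every $\sigma$ is a nonzero purely imaginary number, hence non-real. So in all cases $t\in\R\setminus\{0,-1\}$, and each zero is of exactly one of the two types below.)

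For the local estimate put $F(u,v):=p(x,y)/A=(e(u)-1)+\sigma(e(v)-1)$. \textbf{Case (i)} ($\sigma\notin\R$): the differential of $F$ at $0$ is the real-linear map $(u,v)\mapsto 2\pi i(u+\sigma v)$, with image $2\pi i\cdot\operatorname{span}_\R\{1,\sigma\}=\C$, hence invertible, so $|F|\gtrsim|u|+|v|$ near $0$, which dominates $\|u+tv\|+u^2+v^2$ there for any fixed $t$. \textbf{Case (ii)} ($\sigma=t\in\R\setminus\{0,-1\}$): the differential degenerates along $u+tv=0$, and I would Taylor-expand
\begin{align*}
\operatorname{Im}F&=\sin2\pi u+t\sin2\pi v=2\pi(u+tv)+O(|u|^3+|v|^3),\\
\operatorname{Re}F&=(\cos2\pi u-1)+t(\cos2\pi v-1)=-2\pi^2(u^2+tv^2)+O(u^4+v^4),
\end{align*}
and dichotomize: if $|u+tv|\ge u^2+v^2$ the first line gives $|F|\ge|\operatorname{Im}F|\gtrsim|u+tv|\gtrsim\|u+tv\|+u^2+v^2$; if $|u+tv|<u^2+v^2$ then near $0$ we have $u=-tv+o(\sqrt{u^2+v^2})$, hence $u^2+tv^2=\frac{t(t+1)}{t^2+1}(u^2+v^2)(1+o(1))$, and since $t(t+1)\neq 0$ the second line gives $|F|\ge|\operatorname{Re}F|\gtrsim u^2+v^2\gtrsim\|u+tv\|+u^2+v^2$. (Here $\|u+tv\|=|u+tv|$ because $(u,v)$ is small.) Finally, shrink neighbourhoods $U_j$ of the zeros so as to be pairwise disjoint modulo $\Z^2$: off $\bigcup_jU_j$, $|p|$ is bounded below by a constant depending only on $C_0,C_1,C_2$ while the right side of the asserted inequality is $\lesssim_{C_0,C_1,C_2}1$, and on $U_j$ the local estimate bounds $|p|$ below by the $j$-th term of the minimum; combining the two regions gives the lemma.

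The step I expect to be the crux is Case (ii): one must recognise that $p$ vanishes to first order transverse to the line $u+tv=0$ but only to second order along it, control both regimes quantitatively (this is where $t(t+1)\neq 0$, forced by $\sigma\notin\{0,-1\}$, i.e.\ by $B\neq 0$ and $C_0\neq 0$, is used), and, the point that legitimises a single exponent $t$ for both zeros, observe that the degeneracy slope $\operatorname{Re}(B/A)$ takes the same real value $\tau$ at every zero.
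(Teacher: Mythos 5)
Your proof is correct, and its skeleton is the same as the paper's: a second--order Taylor expansion of $p$ at each zero, separation into real and imaginary parts, a dichotomy according to whether $|u+tv|$ or $u^2+v^2$ dominates, and a compactness bound away from the zeros. What you do differently, and better, is the case organization. The paper first splits by whether $|C_0|,|C_1|,|C_2|$ form a non--degenerate or a degenerate triangle, and in the degenerate case splits again according to which pair of $C_0,C_1e(\gamma_1),C_2e(\gamma_2)$ has a positive ratio, ending up with two different formulas for $t$ (namely $t=C_2e(\gamma_2)/C_1e(\gamma_1)>0$, or $t=-(1+s)/s$ with $s=C_1e(\gamma_1)/C_0>0$). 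You instead compute $\operatorname{Re}(B/A)=\tau=(|C_0|^2-|C_1|^2-|C_2|^2)/(2|C_1|^2)$ once and for all, set $t=\tau$ (or $t=1$ if $\tau=0$), and notice that the only property of $t$ the quadratic regime needs is $t(t+1)\neq 0$, which is automatic from $\sigma=B/A\notin\{0,-1\}$, i.e.\ from $B\neq 0$ and $A+B=-C_0\neq 0$. That single observation absorbs all of the paper's degenerate subcases, and the formula for $\tau$ in terms of the moduli alone makes it transparent why one $t$ serves both zeros --- a point the paper leaves implicit. Your circle--intersection argument for the zero count is likewise a cleaner replacement for the paper's rigid-triangle picture. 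Two small things worth spelling out if you write this up: in case (ii), in the regime $|u+tv|<u^2+v^2$, you should note that this inequality forces $|u|\lesssim_t |v|$ near the origin, which is what makes the error terms in $u^2+tv^2=\frac{t(t+1)}{t^2+1}(u^2+v^2)(1+o(1))$ genuinely $o(1)$; and in the first branch you should absorb the $O(|u|^3+|v|^3)$ error into $2\pi|u+tv|$ using $|u|^3+|v|^3\le(|u|+|v|)(u^2+v^2)\le(|u|+|v|)|u+tv|$.
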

\begin{proof}
If $|C_0|,|C_1|,|C_2|$ can not form a triangle, then $|p(x,y)|\gtrsim_{C_0,C_1,C_2} 1$ and there is nothing to prove. If $|C_0|,|C_1|,|C_2|$ can  form a triangle, given that the side with length  $|C_0|$ is rigid, there are only two possible ways to construct the other two sides (the two triangles will be symmetric with respect to the side with length $|C_0|$). This justifies the fact that there are at most 2 zeros.

Since $p$, $\|\cdot\|$ and $\langle\cdot \rangle$ are 1 periodic, and since $\langle y\rangle =y$ and $\|y\|=|y|$ near 0, it suffices to prove that
\begin{equation}
\label{equ1}
|p(x,y)|\gtrsim_{C_0,C_1,C_2}|(x-\gamma_1^j)+t(y-\gamma_2^j)|,
\end{equation}
and
\begin{equation}
\label{equ223ds}
|p(x,y)|\gtrsim_{C_0,C_1,C_2}|x-\gamma_1^j|^2+|y-\gamma_2^j|^2,
\end{equation}
for $(x,y)$ in a sufficiently small neighborhood (on $\R$) of $(\gamma_1^j,\gamma_2^j)$.

We distinguish two cases. The {\em non-degenerate case} is when $\frac{C_1e(\gamma_1^j)}{C_2e(\gamma_2^j)}$ is not a real number. This is the same as saying that $|C_0|,|C_1|,|C_2|$  form a non-degenerate triangle. By using a Taylor expansion we get
$$\frac{p(x,y)}{2\pi i}=$$$$C_1e(\gamma_1^j)(x-\gamma_1^j+\pi i(x-\gamma_1^j)^2+O(|x-\gamma_1^j|^3))+C_2e(\gamma_2^j)((y-\gamma_2^j)+\pi i(y-\gamma_2^j)^2+O(|y-\gamma_2^j|^3)).$$Note that
$$|C_1e(\gamma_1^j)(x-\gamma_1^j)+C_2e(\gamma_2^j)(y-\gamma_2^j)|\gtrsim_{C_0,C_1,C_2}((x-\gamma_1^j)^2+(y-\gamma_2^j))^{1/2},$$
and thus $|p(x,y)|\gtrsim_{C_0,C_1,C_2}((x-\gamma_1^j)^2+(y-\gamma_2^j))^{1/2}\ge (x-\gamma_1^j)^2+(y-\gamma_2^j)^2$, for $|x-\gamma_1^j|,|y-\gamma_2^j|$ sufficiently small. Hence \eqref{equ1} also holds with, say, $t=1$.

The {\em degenerate case} is when $|C_0|,|C_1|,|C_2|$  form a degenerate triangle. In this case there is only one zero, call it $(\gamma_1,\gamma_2)$. It follows that $C_0,C_1e(\gamma_1),C_2e(\gamma_2)$ are real multiples of each other. Thus, there must exist two among these three numbers with a positive ratio. There are two cases.

First let us assume $t:=\frac{C_2e(\gamma_2)}{C_1e(\gamma_1)}>0$. Then, proceeding as before,
$$\frac{p(x,y)}{2\pi iC_1e(\gamma_1)}=(x-\gamma_1+t(y-\gamma_2)+O(|x-\gamma_1|^3+|y-\gamma_2|^3))+$$$$\pi i((x-\gamma_1)^2+t(y-\gamma_2)^2+O(|x-\gamma_1^j|^3+|y-\gamma_2|^3))$$
where the first term is the real part, while the second term is the imaginary part. Since $t>0$,
$$(x-\gamma_1)^2+t(y-\gamma_2)^2\gtrsim_t(x-\gamma_1)^2+(y-\gamma_2)^2>>(|x-\gamma_1|^3+|y-\gamma_2|^3),$$
for $|x-\gamma_1|,|y-\gamma_2|$ sufficiently small. If $|(x-\gamma_1)+t(y-\gamma_2)|\ge (x-\gamma_1)^2+(y-\gamma_2)^2,$ then the real part is dominant, and thus
$|p(x,y)|\gtrsim |(x-\gamma_1)+t(y-\gamma_2)|$. If $|(x-\gamma_1)+t(y-\gamma_2)|\le (x-\gamma_1)^2+(y-\gamma_2)^2,$ then the imaginary part is dominant, and thus $|p(x,y)|\gtrsim (x-\gamma_1)^2+(y-\gamma_2)^2$.

The second possibility is that $s:=\frac{C_1e(\gamma_1)}{C_0}>0$. The case $\frac{C_2e(\gamma_2)}{C_0}>0$ is completely symmetric, so we omit it. Note that
$$|p(x,y)|=|C_2+C_0e(-\gamma_2)e(\gamma_2-y)+C_1e(\gamma_1-\gamma_2)e(x-\gamma_1-(y-\gamma_2))|=$$
$$2\pi|C_0e(-\gamma_2)\left((\gamma_2-y)+\pi i(\gamma_2-y)^2+O(|\gamma_2-y|^3)\right)+$$$$C_1e(\gamma_1-\gamma_2)\left((x-\gamma_1-(y-\gamma_2))+\pi i(x-\gamma_1-(y-\gamma_2))^2+O(|x-\gamma_1-(y-\gamma_2)|^3)\right)|$$
Thus,
$$\frac{|p(x,y)|}{|2\pi C_0e(-\gamma_2)|}=$$
$$|\left((\gamma_2-y)+s(x-\gamma_1-(y-\gamma_2))+O_s(|\gamma_2-y|^3+|x-\gamma_1|^3)\right)+$$
$$\pi i\left((\gamma_2-y)^2+s(x-\gamma_1-(y-\gamma_2))^2+O_s(|\gamma_2-y|^3+|x-\gamma_1|^3)\right)|.$$
Using the fact that
$$(\gamma_2-y)^2+s(x-\gamma_1-(y-\gamma_2))^2\gtrsim_s (\gamma_2-y)^2+(x-\gamma_1-(y-\gamma_2))^2\ge \frac{(\gamma_2-y)^2+(x-\gamma_1)^2}{4},$$
\eqref{equ1} and \eqref{equ223ds} follow as before, this time with $t:=-\frac{1+s}{s}$.

\end{proof}

We will for the rest of the paper implicitly assume $\beta>0$.
The following result uses simultaneous diophantine approximation to construct sharp almost periods. The requirement (i) will be needed later, in order to be able to place the generators $x$ and $x+\{P_k\}$ of the two orbits, in the (potentially very small) interval $I$ where the "solution" $f$ is guaranteed to be nonzero.

\begin{lemma}
\label{L1}
Let $\alpha,\beta\in\R$ be two nonzero numbers with $\alpha/\beta$ irrational. Let also $1>s>0$ be fixed. Then there exists a constant $0<D=D(s,\alpha,\beta)<\infty$ and a sequence $N_k$ of positive integers going to infinity such that for each $k\ge 1$

(i) $\{\frac{N_k}{\beta}\}<s$,

(ii) $N_k\|N_k\frac{\alpha}{\beta}\|\le D\min_{1\le n\le N_k}n\|n\frac{\alpha}{\beta}\|$,

(iii) $N_k\|N_k\frac{\alpha}{\beta}\|\le D$

\end{lemma}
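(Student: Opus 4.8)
The plan is to obtain the sequence $N_k$ from the continued fraction expansion of the irrational number $\theta := \alpha/\beta$, together with a pigeonhole/averaging argument to arrange condition (i). Recall that if $q_m$ denotes the denominators of the convergents of $\theta$, then $q_m \|q_m \theta\| \le 1$ for all $m$, and moreover $q_m$ essentially realizes the minimum of $n \|n\theta\|$ over $1 \le n \le q_{m+1}-1$: indeed for any $n$ with $q_m \le n < q_{m+1}$ one has $\|n\theta\| \ge \|q_m\theta\|$, and combined with the growth $q_{m+1} \le (a_{m+1}+1)q_m$ this gives $q_m\|q_m\theta\| \lesssim \min_{1\le n \le q_m} n\|n\theta\|$ up to an absolute constant. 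Thus the convergent denominators $q_m$ already satisfy analogues of (ii) and (iii) with an absolute $D$. The only issue is that a given $q_m$ need not satisfy $\{q_m/\beta\} < s$.

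To fix condition (i) I would use the following trick: fix a positive integer $R = R(s)$ with $1/R < s/2$, and for each $m$ consider the $R$ numbers $q_m, 2q_m, \dots, Rq_m$ — no, rather, I would instead pass to the subsequence of convergents themselves combined with small integer multiples, but cleanly the argument is: by Weyl equidistribution of $\{n/\beta\}$ (here $1/\beta$ is irrational; if $1/\beta$ were rational the argument is even easier, using periodicity) the fractional parts $\{q_m/\beta\}$ do not obviously equidistribute, so instead I pick, for each large $m$, an integer $r \in \{1,\dots,R\}$ such that $\{r q_m/\beta\}$ lands in $[0,s)$. Such an $r$ exists once $R$ is large enough in terms of $s$: among any $R = \lceil 2/s\rceil$ consecutive multiples of a fixed real, two of the fractional parts lie within $1/R$, and by a standard three-distance / pigeonhole argument one can reach the interval $[0,s)$ — more precisely, it suffices to note that $\{q_m/\beta\}, \{2q_m/\beta\},\dots$ cannot all avoid an interval of length $s$ if we take $\lceil 1/s\rceil + 1$ of them. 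Set $N_k := r_m q_m$ along a sequence $m \to \infty$. Then (i) holds by construction. For (ii) and (iii), since $r_m \le R$ is bounded, we have $N_k \|N_k\theta\| \le r_m^2 \, q_m\|q_m\theta\| \le R^2\, q_m\|q_m\theta\| \lesssim_R 1$, giving (iii); and $N_k\|N_k\theta\| \le R^2 q_m \|q_m\theta\| \lesssim_R \min_{1\le n \le q_m} n\|n\theta\| \le \min_{1 \le n \le N_k} n\|n\theta\|$, giving (ii), after absorbing $R^2$ and the absolute constant into $D = D(s,\alpha,\beta)$. Finally, $N_k = r_m q_m \to \infty$ since $q_m \to \infty$.

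I would also need to double back and justify the two continued-fraction facts I used: first that $q_m\|q_m\theta\| \le 1$ (standard, since $\|q_m\theta\| < 1/q_{m+1} < 1/q_m$), and second the near-minimality $q_m\|q_m\theta\| \lesssim \min_{1\le n\le N_k} n\|n\theta\|$. For the latter, any $n \le N_k = r_m q_m \le R q_m < q_{m'}$ for the first convergent denominator $q_{m'}$ exceeding $Rq_m$; the best approximation property says $\|n\theta\| \ge \|q_{m'-1}\theta\|$ for $q_{m'-1} \le n$, but one must be slightly careful for small $n$. A cleaner route: for every $n \ge 1$ one has $n\|n\theta\| \ge \frac{1}{a_{\max}+2}$ times... — in any case, the safe statement is that $\min_{1 \le n \le Q} n\|n\theta\|$ is comparable, with constant depending only on the partial quotients up to the relevant index (hence on $\theta$, i.e. on $\alpha,\beta$), to $q_{m}\|q_m\theta\|$ where $q_m \le Q < q_{m+1}$; since $D$ is allowed to depend on $\alpha,\beta$ this is harmless.

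The main obstacle is the interplay between conditions (i) and (ii): condition (ii) essentially forces $N_k$ to be (a bounded multiple of) a convergent denominator of $\alpha/\beta$, while condition (i) is a constraint coming from the unrelated number $1/\beta$, over which we have no control once $N_k$ is pinned down to near-convergents. The resolution — multiplying the convergent denominator by a bounded integer $r_m \le R(s)$ chosen to push $\{N_k/\beta\}$ into $[0,s)$ — works precisely because (ii) and (iii) are robust under multiplication by bounded factors (costing only an $R^2$, absorbed into $D$), whereas without the freedom to let $D$ depend on $\alpha,\beta$ and on $s$ this balancing act would fail. I would make sure the pigeonhole step producing $r_m$ is uniform in $m$ (it is: $R$ depends only on $s$), so that the resulting constant $D$ depends only on $s,\alpha,\beta$ as claimed.
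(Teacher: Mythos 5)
Your approach is the same general scheme as the paper's: take convergent denominators $q_m$ of $\theta=\alpha/\beta$, and multiply by a bounded integer $r\le R(s)$ chosen by pigeonhole to force (i). But your justification of (ii) has a genuine gap. You rely on the near-minimality ``$q_m\|q_m\theta\|\lesssim\min_{1\le n\le N_k}n\|n\theta\|$ with a constant depending only on $\theta$,'' and patch it by appealing to a ``constant depending only on the partial quotients up to the relevant index.'' These are not the same thing: the latter grows with $m$, whereas $D$ in the lemma must be uniform over $k$. Moreover the claim is actually false for the full sequence of convergents when $\theta$ is Liouville: since $q_j\|q_j\theta\|$ is comparable to $1/a_{j+1}$, if the partial quotients are unbounded one can arrange (e.g.\ $a_j$ huge along a sparse subsequence and $a_j=1$ otherwise) that for infinitely many $m$ with $a_{m+1}=1$ the ratio $q_m\|q_m\theta\|\,/\,\min_{n\le q_m}n\|n\theta\|$ blows up, so no finite $D=D(\alpha,\beta)$ covers all $m$, let alone all $N_k=r_mq_m$.

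The paper avoids this by splitting into two cases. In the badly approximable case, $\inf_{n}n\|n\theta\|=\epsilon>0$ makes (ii) trivial after Dirichlet -- this is exactly the direction of your unfinished ``$n\|n\theta\|\ge 1/(a_{\max}+2)$'' remark, which is only available when the $a_j$ are bounded. In the well approximable case the paper does \emph{not} use all convergents; it restricts to the infinite subsequence of denominators $q_m$ whose following partial quotient is at least of order $1/s$. That restriction does double duty: it guarantees $r q_m<q_{m+1}$ for every $r\le 1/s$, so that the best-approximation property applies over the whole range $1\le n\le N_k$, and it makes $q_m\|q_m\theta\|$ the running minimum (up to constants) over $n<q_{m+1}$. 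Without this subsequence selection your argument does not close; with it, it becomes essentially the paper's proof.
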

\begin{proof}
We have two cases. If (call this the {\em badly approximable regime})
$$\liminf_{N\to\infty}N\|N\frac{\alpha}{\beta}\|>0,$$
then let $\epsilon:=\min_{N\in\N}N\|N\frac{\alpha}{\beta}\|>0.$
Dirichlet's Theorem implies that we can choose a sequence $N_k'\to\infty$  such that
$N_k'\|N_k'\frac{\alpha}{\beta}\|\le 1$ for each $k$. But then
$$N_k'\|N_k'\frac{\alpha}{\beta}\|\le \frac{1}{\epsilon
}\min_{n\in\N}n\|n\frac{\alpha}{\beta}\|.$$
Finally, by pigeonholing, for each $k$ there must exist some $1\le m_k\le 1/s$ such that $N_k:=m_kN_k'$ satisfies (i). It is immediate that
$$N_k\|N_k\frac{\alpha}{\beta}\|\le \frac{1}{s^2\epsilon
}\min_{n\in\N}n\|n\frac{\alpha}{\beta}\|.$$

Assume next that we are in the {\em well approximable regime}, that is
$$\liminf_{N\to\infty}N\|N\frac{\alpha}{\beta}\|=0.$$
This is equivalent with saying the the sequence $a_k$ in the continued fraction of $\alpha/\beta$ is unbounded.
Let $(p_k,N_k')$ be the sequence of best approximants for $\frac{\alpha}{\beta}$, ordered such that $N_k'$ is increasing.
Thus
$$|\frac{\alpha}{\beta}-\frac{p_k}{N_k'}|\le\frac{1}{N_k'N_{k+1}'}.$$ Recall that $N_{k+1}'\ge a_kN_k'$, and thus $sN_{k+1}'>N_k'$ for each $k\in E$, where $E$ is infinite.  It is known (see for example Theorem 7.13 in \cite{Nie}) that for each $k$
$$N_k'\|N_k'\frac{\alpha}{\beta}\|= \min_{1\le n< N_{k+1}'}n\|n\frac{\alpha}{\beta}\|.$$Let $\pi:\N\to E$ be an increasing bijection.
Choose as before  $1\le m_k\le 1/s$ such that $N_k:=m_kN_{\pi(k)}'$ satisfies (i). Note that $N_k<N_{\pi(k)+1}'$, and thus (ii)-(iii) follow as before.

\end{proof}

The next lemma will be needed in the proof of Proposition \ref{P1}.

\begin{lemma}\label{l7}
Let $C_0,C_1,C_2\in \C$ be some nonzero $complex$ numbers. Let $\alpha,\beta$ be some nonzero real numbers. Define
$$P(x)=C_0+C_1e(\alpha x)+C_2e(\beta x).$$ Let $(N_k)$ be a sequence such that (ii) and (iii) in Lemma \ref{L1} hold. Define $\frac1{M_k}:=\frac{N_k\|N_k\frac{\alpha}{\beta}\|}{D},$
and let $P_k:=\frac{N_k}{\beta}$.
Then for each $k$ and each $\delta>0$, there exists an exceptional set $E_{k,\delta}\subset [0,1]$ such that
$$|E_{k,\delta}|<\delta$$
and
$$\frac{1}{M_kP_k}\sum_{n=0}^{[P_k]-1}\frac{1}{|P(x+n)|}\lesssim_{\delta,C_0,C_1,C_2,\alpha,\beta} \log P_k,$$
for each $x\in [0,1]\setminus E_{k,\delta}$. \end{lemma}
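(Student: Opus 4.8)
The plan is to use Lemma \ref{L4} to replace $1/|P(x+n)|$ by a sum, over the (at most two) zeros of $p(u,v)=C_0+C_1e(u)+C_2e(v)$, of a more structured quantity, and then to estimate that sum by feeding the Diophantine content of hypotheses (ii)--(iii) into a dyadic decomposition. Writing $P(x+n)=p(\alpha(x+n),\beta(x+n))$ and applying Lemma \ref{L4} gives $\frac1{|P(x+n)|}\lesssim_{C_0,C_1,C_2}\sum_j\frac1{F_j(n)}$ with
$$F_j(n)=\bigl\|\alpha(x+n)-\gamma_1^j+t\langle\beta(x+n)-\gamma_2^j\rangle\bigr\|+\bigl\|\alpha(x+n)-\gamma_1^j\bigr\|^2+\bigl\|\beta(x+n)-\gamma_2^j\bigr\|^2,$$
so it suffices, for each zero and allowing an exceptional set of measure $<\delta/2$, to prove $\sum_{n=0}^{[P_k]-1}\frac1{F(n)}\lesssim M_kP_k\log P_k$. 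Fix a zero, drop the superscript, put $\theta:=\alpha/\beta$ (necessarily irrational, since $M_k<\infty$) and $c_2:=\theta\gamma_2-\gamma_1$. With $m_n$ the nearest integer to $\beta(x+n)-\gamma_2$ and $\sigma_n:=\langle\beta(x+n)-\gamma_2\rangle$, the relation $\alpha=\theta\beta$ yields the identity
$$F(n)=\bigl\|\theta m_n+c_2+(\theta+t)\sigma_n\bigr\|+\bigl\|\theta m_n+c_2+\theta\sigma_n\bigr\|^2+\sigma_n^2,$$
and as $n$ runs over $\{0,\dots,[P_k]-1\}$ the integers $m_n$ take $\lesssim_\beta N_k$ values, each attained $O_\beta(1)$ times. (If $\theta+t=0$ one argues with the quadratic terms; this alignment can anyway be avoided via the freedom in the choice of $t$ in Lemma \ref{L4}.)

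The Diophantine input is that (ii) says exactly $\|j\theta\|\ge\frac1{jM_k}$ for $1\le j\le N_k$. Hence any two integers $m\ne m'$ in a common interval of length $\le N_k$ with $\|\theta m+c_2\|,\|\theta m'+c_2\|\le\eta$ satisfy $2\eta|m-m'|\ge|m-m'|\,\|(m-m')\theta\|\ge\frac1{M_k}$, so they are $\ge\frac1{2\eta M_k}$ apart; cutting the range of the $m_n$ into $O_\beta(1)$ such intervals yields the regularity estimate
$$\#\{\,n<[P_k]:\ \|\theta m_n+c_2\|\le\eta\,\}\ \lesssim_\beta\ \eta M_kN_k+1\qquad(\eta>0),$$
which is the quantitative form of the statement from the introduction that $(\{n\alpha\},\{n\beta\})$ is the more regular the more Diophantine $\theta$ is, with $M_k$ the compensating factor divided out at the end. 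Ordering the numbers $\|\theta m_n+c_2\|$ over $n<[P_k]$, its $i$-th smallest value is $\gtrsim_\beta\frac i{M_kN_k}$ for $i\ge2$, so for every $\kappa>0$
$$\sum_{\substack{n<[P_k]\\ \|\theta m_n+c_2\|\ge\kappa}}\frac1{\|\theta m_n+c_2\|}\ \lesssim_\beta\ \frac1\kappa+M_kN_k\log N_k ,\qquad \sum_{\substack{n<[P_k]\\ \|\theta m_n+c_2\|\ge\kappa}}\log\frac1{\|\theta m_n+c_2\|}\ \lesssim_\beta\ M_kN_k\log N_k\ \text{ once }\kappa\gtrsim\tfrac1{M_kN_k\log N_k}.$$

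Next I would build the exceptional set and split the indices. Each arc $\{(\{\alpha(x+n)\},\{\beta(x+n)\}):x\in[0,1]\}$ has length $\asymp1$ in $(\R/\Z)^2$, so the set of $x$ for which some orbit point lies within $\eta$ of the zero has measure $\lesssim_{\alpha,\beta}P_k\eta$: taking $\eta\asymp_\delta1/P_k$ removes measure $<\delta/4$ and leaves $F(n)\gtrsim_\delta1/P_k^2$ for all $n$. Put $\kappa:=1/(M_kN_k\log N_k)$. By the regularity estimate only $O_\beta(1)$ indices ever have $\|\theta m_n+c_2\|<\kappa$; removing from $[0,1]$, for each of these, the tiny set of $x$ on which also $|\sigma_n|<\sqrt\kappa$ costs a further measure $\lesssim_\beta\sqrt\kappa$, which is $<\delta/4$ once $k$ is large, and ensures that for the surviving $x$ every index with $\|\theta m_n+c_2\|<\kappa$ has $F(n)\ge\sigma_n^2\ge\kappa$. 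Now call $n$ linear-dominant if the linear term of $F(n)$ is $\ge\tfrac12\|\theta m_n+c_2\|$, so that $\frac1{F(n)}\le\frac2{\|\theta m_n+c_2\|}$. The linear-dominant indices with $\|\theta m_n+c_2\|\ge\kappa$ are controlled by the first displayed estimate ($\lesssim_\beta\frac1\kappa+M_kN_k\log N_k=2M_kN_k\log N_k$), and the $O_\beta(1)$ remaining ones have $\frac1{F(n)}\le 1/\kappa=M_kN_k\log N_k$; thus the linear-dominant contribution is $\lesssim_{\beta,\delta}M_kN_k\log N_k\asymp_\beta M_kP_k\log P_k$.

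The main obstacle is the complementary family $\mathcal N$ of near-cancellation indices, where the linear term of $F(n)$ is $<\tfrac12\|\theta m_n+c_2\|$: there $|\sigma_n|\asymp_{\theta,t}\|\theta m_n+c_2\|$ and one only has $\frac1{F(n)}\lesssim1/\sigma_n^2$, and $\mathcal N$ cannot be removed by a small exceptional set because for a.e.\ $x$ some of its indices occur (so the naive bound $1/\sigma_n^2\lesssim_\delta P_k^2$ overshoots both when $M_k$ is small and when it is large). After the removals above $\mathcal N$ consists only of indices with $\|\theta m_n+c_2\|\ge\kappa$, and I would handle it by averaging: for a fixed index the near-cancellation condition confines $\sigma_n$ to an $x$-interval of length $\lesssim_\beta\|\theta m_n+c_2\|$, on which $\int\frac{dx}{F(n)}\lesssim_{\theta,t,\beta}\log\frac1{\|\theta m_n+c_2\|}$; summing over $n$ (each value of $m$ being hit $O_\beta(1)$ times) and invoking the second displayed estimate gives $\int_0^1\sum_{n\in\mathcal N}\frac1{F(n)}\,dx\lesssim_\beta M_kN_k\log N_k$, so Chebyshev's inequality yields $\sum_{n\in\mathcal N}\frac1{F(n)}\lesssim_{\beta,\delta}M_kN_k\log N_k$ off a set of measure $<\delta/2$. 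Adding the two families, then the two zeros, gives $\sum_{n=0}^{[P_k]-1}\frac1{|P(x+n)|}\lesssim_{\delta,C_0,C_1,C_2,\alpha,\beta}M_kP_k\log P_k$ off a set of measure $<\delta$, which is the claim. The delicate points — where I expect the real work to be — are the identity for $F(n)$ that linearizes the problem, the separation argument turning hypothesis (ii) into the regularity estimate, and above all the calibration of the two-layer exceptional set so that the near-cancellation range, which contributes a single close-approach term $\asymp1/\min_n\|\theta m_n+c_2\|$ to the average, is tamed down to a logarithm while the deleted measure stays below $\delta$.
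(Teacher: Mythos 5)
Your argument is correct, and it does establish the bound $\sum_{n=0}^{[P_k]-1}|P(x+n)|^{-1}\lesssim M_kP_k\log P_k$ off a set of measure $<\delta$, which is the claim. It is, however, a genuinely different organization of the proof from the one in the paper, so let me compare them. The paper drops the linear term of Lemma~\ref{L4} entirely and works only with the quadratic lower bound $A_n(x)=\|\alpha(x+n)-\gamma_1\|^2+\|\beta(x+n)-\gamma_2\|^2$. Its Diophantine input, the same hypothesis~(ii) you use, is packaged geometrically: the ``key observation'' is that any strip $\{\,|\beta x-\alpha y-a|<\tfrac1{4M_kP_k}\,\}$ contains $O_\beta(1)$ of the orbit points $(\{-n\alpha+\gamma_1\},\{-n\beta+\gamma_2\})$; fixing an interval $H$ on which $[\alpha x],[\beta x]$ are constant, the orbit points sort into strips $St_j$ parallel to the $x$-segment, and a change of variables gives $\int_H \sum_{n\in B_j}(\,\cdots\,)^{-1}\lesssim M_kP_k/|j|$, so the $\log P_k$ comes from the harmonic sum $\sum_j 1/|j|$ over the $\le P_k$ nonempty strips. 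Because the linear term is never invoked, there is no dichotomy between ``linear-dominant'' and ``near-cancellation'' indices, so there is no need for your second layer of exceptional set and no separate averaging step for the near-cancellation family. By contrast, you retain the linear term $\|\alpha(x+n)-\gamma_1+t\langle\beta(x+n)-\gamma_2\rangle\|$, rewrite everything in the one-dimensional quantity $\|\theta m_n+c_2\|$, derive from~(ii) the separation/regularity bound $\#\{n:\|\theta m_n+c_2\|\le\eta\}\lesssim_\beta 1+\eta M_kN_k$, and split into a linear-dominant part (direct harmonic-sum estimate via the $i$-th smallest $\gtrsim_\beta i/(M_kN_k)$) and a near-cancellation part (averaged in $x$, then Chebyshev). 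The Diophantine content is identical — your regularity bound and the paper's strip bound are two renderings of the same separation from~(ii) — and both get the $\log$ from a harmonic sum, but the paper's route is shorter precisely because it forgoes the linear term. Interestingly, your version much more closely parallels the paper's proof of Lemma~\ref{l7'} in Section~4, where the linear term of Lemma~\ref{L4} \emph{is} used and an analogous case split on whether $\alpha+t\beta$ vanishes appears. Two small points: the claim that the $i$-th smallest value of $\|\theta m_n+c_2\|$ is $\gtrsim_\beta i/(M_kN_k)$ should read ``for $i\gtrsim_\beta 1$'' rather than ``for $i\ge2$'' (the regularity bound has a $+1$), which costs nothing; and the aside that $\theta+t=0$ ``can be avoided via the freedom in $t$'' is not quite right, since in the degenerate case of Lemma~\ref{L4} the value of $t$ is forced — but your fallback of arguing with the quadratic terms in that case (which is in fact what the paper always does) covers it.
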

\begin{proof}
Let $(\gamma_1,\gamma_2)\in[0,1]^2$ be a real zero of the polynomial $p(x,y)=C_0+C_1e(x)+C_2e(y)$. Define
$$A_n(x):=\|\alpha (x+n)-\gamma_1\|^2+\|\beta (x+n)-\gamma_2\|^2$$
By Lemma \ref{L4}, it suffices to find an exceptional set with $|E_{k,\delta}|\le \frac{\delta}{2},$ such that
\begin{equation}
\label{equ:er5ahyso}
\frac{1}{P_kM_k}\sum_{n=0}^{[P_k]-1}\frac{1}{A_n(x)}\lesssim_{\delta,\alpha,\beta} \log P_k,
\end{equation}
for each $x\in [0,1]\setminus E_{k,\delta}$.
The heuristics behind the proof is that if $\alpha/\beta$ is less Diophantine, then the estimate above holds because $M_k$ is large, while if $\alpha/\beta$ is Diophantine, we win because of extra regularity of the counting measure for the points $(n\alpha,n\beta)$.

The key observation  is that each strip
$$S_a:=\{(x,y):a-\frac{1}{4M_kP_k}\le \beta x-\alpha y< a+\frac{1}{4M_kP_k}\}$$
 contains at most $O_{\beta}(1)$ points $(\{-n\alpha+\gamma_1\},\{-n\beta+\gamma_2\})$, $0\le n\le [P_k]-1$. Indeed, assume for contradiction that some $S_a$ contains  both  $(\{-n\alpha+\gamma_1\},\{-n\beta+\gamma_2\})$ and $(\{-m\alpha+\gamma_1\},\{-m\beta+\gamma_2\})$, with $[P_k]-\frac1\beta>|m-n|>\frac{2}{\beta}$. It follows that $$|\beta([-n\alpha+\gamma_1]-[-m\alpha+\gamma_1])-\alpha([-n\beta+\gamma_2]-[-m\beta+\gamma_2])| < \frac{1}{M_kP_k}.$$
Note that $L:=|[-n\beta+\gamma_2]-[-m\beta+\gamma_2]|\not=0$ and $L\le |-n\beta+m\beta|+1\le N_k$. The above implies that $\|L\frac{\alpha}{\beta}\|<\frac{1}{M_kP_k\beta}\le\frac{1}{M_kL}$, which contradicts  (ii)  in Lemma \ref{L1}.

Let $\mathcal C$ be the collection of all points $\{-n\alpha+\gamma_1\}$ with $0\le n\le [P_k]-1$. Let $E_\delta'$ be the set of $x\in [0,1)$ such that the distance from $\{\alpha x\}$ to $\mathcal C$ is less than $\frac{\delta|\alpha|}{20P_k}$. Then $|E_\delta'|<\delta/10$.

Let $E_\delta''$ consist of those $x\in [0,1)$ with  $\min(\|\alpha x\|,\|\beta x\|)\le \frac{\delta(|\alpha|+|\beta|)}{100}$. Then $|E_\delta''|\le\frac{\delta}{10}$.

Note that if $u,v\in [0,1)$ with $\|u\|>\epsilon$ for some $\epsilon>0$, then
\begin{equation}
\label{e:199056484637}
\|u-v\|\gtrsim_\epsilon |u-v|.
\end{equation}

Split $[0,1)\setminus E_\delta''$ in intervals $H\in\mathcal H$, such that for each $H$, the integer parts $[\alpha x]$ and $[\beta x]$ are both constant, when $x$ varies through $H$. Thus, the points $(\{\alpha x\},\{\beta x\})$, $x\in H$ sit on a fixed line $\beta x-\alpha y=c_H$. Note that $\mathcal H$ contains $O_{\alpha,\beta}(1)$ intervals.

Let $H\in\mathcal H$. Then, using \eqref{e:199056484637},
$$\int_{H\setminus E_{\delta'}}\frac{1}{P_kM_k}\sum_{n=0}^{[P_k]-1}\frac{1}{A_n(x)}d\lesssim_{\alpha,\beta,\delta}$$$$\int_{H\setminus E_{\delta'}}\frac{1}{P_kM_k}\sum_{n=0}^{[P_k]-1}\frac{1}{|\{\alpha x\}-\{-n\alpha+\gamma_1\}|^2+|\{\beta x\}-\{-n\beta+\gamma_2\}|^2}dx.$$
The key observation implies that there are $O_\beta(1)$ points $(\{-n\alpha+\gamma_1\},\{-n\beta+\gamma_2\})$ in each strip $St_j:=\{(x,y):c_H+\frac{j}{2M_kP_k}\le \beta x-\alpha y< c_H+\frac{j+1}{2M_kP_k}\}$, with, say,  $-10M_kP_k\le j\le 10M_kP_k$. Call $B_j$ the set of $n$ corresponding to these points.
We first evaluate for $j\notin\{-1,0\}$,
$$\int_{H}\sum_{n=0\atop{n\in B_j}}^{[P_k]-1}\frac{1}{|\{\alpha x\}-\{-n\alpha+\gamma_1\}|^2+|\{\beta x\}-\{-n\beta+\gamma_2\}|^2}dx.$$
Since $(\{\alpha x\},\{\beta x\})_{x\in H}$ belong to a line segment, and since the points in $St_j$ belong to a strip parallel to and at distance at least $|j|/2M_kP_k$ from this line segment,
a simple change of coordinates shows that the integral above is dominated by $O_{\alpha,\beta}(\int_0^1\frac{1}{x^2+\left(\frac{|j|}{M_kP_k}\right)^2}dx)=O_{\alpha,\beta}(\frac{M_kP_k}{|j|})$. Of course, only at most $P_k$ values of $j$ can contribute, and thus
$$\int_{H}\frac{1}{M_kP_k}\sum_{n=0\atop{n\in \bigcup_{j\notin\{0,1\}}B_j}}^{[P_k]-1}\frac{1}{|\{\alpha x\}-\{-n\alpha+\gamma_1\}|^2+|\{\beta x\}-\{-n\beta+\gamma_2\}|^2}d\lesssim_{\alpha,\beta}$$
$$\frac{1}{M_kP_k}\sum_{j\notin\{0,1\}\atop{St_j\not=\emptyset}}\frac{M_kP_k}{|j|}\lesssim \log P_k.$$

On the other hand, if $n\in B_0\cup B_{-1}$, then we use the fact that if $x\notin E_{\delta}'$ then
$$|\{\alpha x\}-\{-n\alpha+\gamma_1\}|^2+|\{\beta x\}-\{-n\beta+\gamma_2\}|^2\gtrsim \frac{\delta^2|\alpha|^2}{P_k^2}.$$Using this, the fact that $B_0\cup B_{-1}$ contains $O_\beta(1)$ points and the fact that $M_k\ge 1$, we get via a change of variables that
$$\int_{H \setminus E_{\delta}'}\frac{1}{M_kP_k}\sum_{n=0\atop{n\in B_0\cup B_{-1}}}^{[P_k]-1}\frac{1}{|\{\alpha x\}-\{-n\alpha+\gamma_1\}|^2+|\{\beta x\}-\{-n\beta+\gamma_2\}|^2}dx$$
is dominated by $\frac{1}{M_kP_k}O(\int_{\frac{\delta^2|\alpha|^2}{P_k^2}}^1\frac{1}{x^2}dx))=O_{\alpha,\beta}(\frac{P_k}{\delta M_kP_k})=O_{\alpha,\beta,\delta}(1).$
Thus
$$\int_{[0,1)\setminus (E_{\delta'}\cup E_\delta'')}\frac{1}{M_kP_k}\sum_{n=0}^{[P_k]-1}\frac{1}{|\{\alpha x\}-\{-n\alpha+\gamma_1\}|^2+|\{\beta x\}-\{-n\beta+\gamma_2\}|^2}dx=$$$$\sum_{H\in\mathcal H}\int_{H\setminus E_{\delta'}}\frac{1}{M_kP_k}\sum_{n=0}^{[P_k]-1}\frac{1}{|\{\alpha x\}-\{-n\alpha+\gamma_1\}|^2+|\{\beta x\}-\{-n\beta+\gamma_2\}|^2}dx$$$$\lesssim_{\alpha,\beta,\delta} \log P_k,$$
since there are $O_{\alpha,\beta}(1)$ intervals $H$. It follows  that
$$|E_\delta''':=\{x\in [0,1)\setminus (E_{\delta'}\cup E_\delta''):\frac{1}{M_kP_k}\sum_{n=0}^{[P_k]-1}\frac{1}{|P(x+n)|}\ge C_{\alpha,\beta,\delta}\log P_k\}|\le \delta/10,$$
if $C_{\alpha,\beta,\delta}$ is chosen large enough.
Finally, define $E_{k,\delta}:=E_\delta'\cup E_\delta''\cup E_\delta'''$.
\end{proof}

The next proposition captures the almost orthogonality phenomenon. The idea is to avoid estimating products along individual orbits (which seems nearly impossible), but rather to compare products along two orbits.

\begin{proposition}
\label{P1}
Let $(N_k)$ be a sequence such that (ii) and (iii) in Lemma \ref{L1} hold, and define $P_k:=\frac{N_k}{\beta}$. Let $C_0,C_1,C_2\in\C$ and $\alpha,\beta\in \R$ be nonzero and define $P(x)=C_0+C_1e(\alpha x)+C_2e(\beta x)$. Given $\delta>0$,  there exists $E_{k,\delta}\subset [0,1]$ such that $|E_{k,\delta}|\le \delta$, and there exists a  universal constant $L=L(\delta,C_0,C_1,C_2,\alpha,\beta)$ such that  for each $x,y$ satisfying $x\in [0,1)\setminus E_{k,\delta}$ and $x=y+P_k$, we have
$$|\prod_{n=0}^{[P_k]-1}P(y+n)|\le P_k^L |\prod_{n=0}^{[P_k]-1}P(x+n)|.$$
\end{proposition}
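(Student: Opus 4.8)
The plan is to compare $\prod_{n=0}^{[P_k]-1}|P(x+n)|$ and $\prod_{n=0}^{[P_k]-1}|P(y+n)|$ for $y=x+P_k$ by passing to logarithms, exploiting that $P_k=N_k/\beta$ is a near-period for the coefficients $e(\alpha(\cdot))$ and $e(\beta(\cdot))$ appearing in $P$. First I would take logs and write
\[
\log\Bigl|\prod_{n=0}^{[P_k]-1}\frac{P(y+n)}{P(x+n)}\Bigr|=\sum_{n=0}^{[P_k]-1}\bigl(\log|P(y+n)|-\log|P(x+n)|\bigr).
\]
Since $\beta P_k=N_k$ is an integer, $e(\beta(y+n))=e(\beta(x+n))$ exactly, so the term $C_2e(\beta(x+n))$ is unchanged by the shift. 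For the $\alpha$-term we have $e(\alpha(y+n))=e(\alpha(x+n))e(\alpha P_k)$ with $\|\alpha P_k\|=\|N_k\frac{\alpha}{\beta}\|$; by (iii) of Lemma \ref{L1} this is $O(1/N_k)\lesssim 1/P_k$. Thus $P(y+n)$ differs from $P(x+n)$ by an additive perturbation of size $O(1/P_k)$ in the $\alpha$-monomial.

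Next I would split the sum into indices $n$ where $|P(x+n)|$ (equivalently $|P(y+n)|$) is bounded below by a fixed constant — here $\log|P(y+n)|-\log|P(x+n)|=O(1/P_k)$ since both are smooth there and the perturbation is $O(1/P_k)$, contributing $O(1)$ in total — and the remaining "bad" indices, where $P$ is small. For the bad indices one cannot control individual logarithms, only the sum $\sum 1/|P(x+n)|$, and this is exactly where Lemma \ref{l7} enters: outside an exceptional set $E_{k,\delta}$ of measure $\le\delta$ one has $\frac{1}{M_kP_k}\sum_{n=0}^{[P_k]-1}\frac{1}{|P(x+n)|}\lesssim_{\delta}\log P_k$, hence $\sum_{n}\frac{1}{|P(x+n)|}\lesssim_\delta M_kP_k\log P_k$. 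The final point is to turn this $\ell^1$ bound on reciprocals into a bound on $\sum|\log|P(x+n)||$ over the bad set: for each bad $n$ either $|P(x+n)|\gtrsim 1$ already (not bad) or $|P(x+n)|$ is small, in which case $\bigl|\log|P(x+n)|\bigr|\lesssim 1/|P(x+n)|$ is false — so instead I would use $\bigl|\log|P(x+n)|\bigr|\le \log\bigl(2+\tfrac{1}{|P(x+n)|}\bigr)\lesssim \log P_k + \log\bigl(1+\tfrac{1}{|P(x+n)|}\bigr)$, and bound $\sum_n \log(1+1/|P(x+n)|)$ using concavity of $\log$ together with the number-of-bad-indices count (which is $O_{\alpha,\beta}(1)$ per relevant strip as in the key observation of Lemma \ref{l7}, so $O(M_kP_k)$ total) and Jensen: $\sum \log(1+1/|P(x+n)|)\le (\#\text{bad})\log\bigl(1+\frac{1}{\#\text{bad}}\sum 1/|P(x+n)|\bigr)\lesssim M_kP_k\log P_k$. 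Either way the total is $\lesssim M_kP_k\log P_k$. The same reasoning applied to $y$-indices (using that the $x$- and $y$-exceptional sets can be arranged to coincide, since $y=x+P_k$ only shifts the $\alpha$-argument by the tiny quantity $\alpha P_k$, which I absorb by enlarging $E_{k,\delta}$) gives the matching bound for $\sum|\log|P(y+n)||$. Combining, $\bigl|\log\prod|P(y+n)|/|P(x+n)|\bigr|\lesssim_\delta M_kP_k\log P_k$. Finally, recall $M_k^{-1}=N_k\|N_k\frac{\alpha}{\beta}\|/D\le D^{-1}\cdot D=1$ by (iii), wait — more precisely $M_kP_k = \frac{D P_k}{N_k\|N_k\alpha/\beta\|}$; since $N_k\|N_k\alpha/\beta\|\le D$ this is at least $P_k$, so $M_k P_k$ could be large, but by (iii) we only know $M_k P_k \le D P_k / (N_k\|N_k\alpha/\beta\|)$ and the denominator can be as small as $\asymp 1/N_k$, making $M_kP_k$ as large as $\asymp P_k^2$. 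This still gives $\bigl|\log\prod\bigr| \lesssim_\delta P_k^2\log P_k \lesssim L\log P_k$ with $L=L(\delta,\dots)$ only after checking the bound is really $\lesssim M_kP_k$, not $M_kP_k\log P_k$ — so I would need to track constants carefully here.

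\emph{Main obstacle.} The crux is the passage from Lemma \ref{l7}'s averaged bound on $\sum 1/|P(x+n)|$ to a bound on $\sum\bigl|\log|P(x+n)|\bigr|$ that is polynomial in $P_k$: a naive $|\log t|\lesssim 1/t$ for small $t$ would give $\sum|\log|P(x+n)||\lesssim M_kP_k\log P_k$, and then I must verify $M_kP_k\le P_k^{C}$ for some absolute $C$ — i.e. that $M_k$ is at most polynomial in $P_k$ — which is not automatic since $M_k=D/(N_k\|N_k\alpha/\beta\|)$ can be superpolynomially large when $\alpha/\beta$ is very well approximable. The resolution must be that one does \emph{not} use all of Lemma \ref{l7} but rather combines it with the triangle/concavity estimate so that only the \emph{number} of bad indices (which is genuinely $O(M_kP_k)$ via the strip-counting, and $M_kP_k\le P_k\cdot(D/\|N_k\alpha/\beta\|)$... ) enters — honestly, the cleanest fix is: on the good set $|P|\gtrsim 1$; the bad indices number $O(1)$ per occupied strip and the strips are $O(M_kP_k)$ in number but only $O(P_k)$ can be nonempty (there are only $[P_k]$ indices $n$), so there are at most $O(P_k)$ bad indices; for each, $\bigl|\log|P(x+n)|\bigr|\lesssim \log P_k + \log(1/|P(x+n)|)$, and summing the first part gives $O(P_k\log P_k)$ while the second is controlled by Lemma \ref{l7} and concavity to be $O(P_k\log P_k)$ as well. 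That yields the clean bound $\bigl|\log\prod |P(y+n)|/|P(x+n)|\bigr|\le L\log P_k$ with $L=L(\delta,C_0,C_1,C_2,\alpha,\beta)$, i.e. $|\prod P(y+n)|\le P_k^{L}|\prod P(x+n)|$, as claimed. Getting this bookkeeping exactly right — in particular confirming the bad-index count is $O(P_k)$ and not something larger — is the step I expect to require the most care.
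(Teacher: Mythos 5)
Your proposal starts along the right lines (take logs, exploit that $\beta P_k=N_k\in\Z$ so the $C_2$-monomial is exactly invariant, and that $\|\alpha P_k\|=\|N_k\tfrac{\alpha}{\beta}\|$ is small), but it then goes off track in two places, and the second is fatal.

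First, you use only the crude bound $\|N_k\tfrac{\alpha}{\beta}\|=O(1/N_k)$ from (iii), i.e.\ perturbation $O(1/P_k)$. The definition of $M_k$ in Lemma~\ref{l7} is exactly $\tfrac1{M_k}=\tfrac{N_k\|N_k\alpha/\beta\|}{D}$, so the sharp statement is $\|N_k\tfrac{\alpha}{\beta}\|\asymp\tfrac{1}{M_kP_k}$. This matters: Lemma~\ref{l7} gives $\sum_n\tfrac{1}{|P(x+n)|}\lesssim_\delta M_kP_k\log P_k$, and it is precisely the factor $\tfrac{1}{M_kP_k}$ in the perturbation that cancels the $M_kP_k$ so that the net contribution is $\lesssim_\delta\log P_k$. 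Using only $O(1/P_k)$ leaves an uncancelled $M_k$, and $M_k$ can be superpolynomial in $P_k$ when $\alpha/\beta$ is well approximable — so your worry in the last paragraph is self-inflicted, not intrinsic to the problem, and your subsequent ``fixes'' never actually remove it.

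Second, and more seriously, for the ``bad'' indices you abandon the term-by-term comparison and try instead to bound $\sum_n\bigl|\log|P(x+n)|\bigr|$ and $\sum_n\bigl|\log|P(y+n)|\bigr|$ \emph{separately} and then subtract. This cannot give the result: each sum can easily be of size $\asymp P_k$ (take $|P(x+n)|\approx 1/2$ for all $n$), while the difference you actually need to bound is $O(\log P_k)$. Bounding each side by $O(P_k\log P_k)$ and subtracting only gives $O(P_k\log P_k)$ for the ratio, which is useless. The cancellation between the two orbits has to be retained through a term-by-term estimate, including on bad indices. The paper's device is the elementary inequality $a+b\le a\,e^{b/a}$ for $a,b>0$: with $a=|P(x+n)|$ and $b\lesssim\tfrac{1}{M_kP_k}$ (the perturbation), one gets
\begin{equation*}
|P(y+n)|\le |P(x+n)|\exp\Bigl(\frac{C}{M_kP_k\,|P(x+n)|}\Bigr),
\end{equation*}
uniformly in $n$, whether $n$ is ``good'' or ``bad.'' Multiplying over $n$ turns the product estimate into $\exp\bigl(\tfrac{C}{M_kP_k}\sum_n\tfrac1{|P(x+n)|}\bigr)$, and Lemma~\ref{l7} bounds the exponent by $L\log P_k$ outside $E_{k,\delta}$, giving exactly $P_k^L$. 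There is no need to split into good/bad indices, no need for a separate $y$-exceptional set, and no need to control $\sum|\log|P||$. Your argument is missing this single estimate, and without it (or an equivalent per-term comparison on bad indices), the proof does not close.
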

\begin{proof}
Let as before
$$\frac1{M_k}:=\frac{P_k\|P_k\frac\alpha\beta\|}{D}<1.$$
Then, we have
$$|e(\alpha x)-e(\alpha y)|=|e(P_k\alpha)-1|\le \frac{10D}{P_kM_k}\;\;\text{ and }\;\;|e(\beta x)-e(\beta y)|=|e(P_k\beta)-1|\le \frac{10D}{P_kM_k}.$$
Thus, for each $n\in \N$
$$|P(y+n)|\le |P(x+n)|+\frac{CD}{M_kP_k},$$
for some universal constant $C:=100(|C_1|+|C_2|)$.

Use the fact $a+b\le ae^{\frac{b}{a}}$ for each $a,b>0$ to get
$$|P(y+n)|\le |P(x+n)|e^{\frac{CD}{M_kP_k|P(x+n)|}},$$
and thus
$$|\prod_{n=0}^{[P_k]-1}P(y+n)|\le |\prod_{n=0}^{[P_k]-1}P(x+n)|e^{\frac{CD}{M_kP_k}\sum_{n=0}^{[P_k]-1}\frac1{|P(x+n)|}}.$$
The result now follows from Lemma \ref{l7}.
\end{proof}

\begin{proof}
(of Theorem \ref{t:main} for (1,3) configurations)

It was proved in \cite{HRT} that the linear independence of the time-frequency translates is preserved under area preserving affine transformations of the plane, both for $L^2$ and for Schwartz functions. Thus, it is easy to see that any (1,3) configuration  can be reduced to a special (1,3) configuration like $(0,0),(1,0),(1,\alpha),(1,\beta)$, for some nonzero $\alpha,\beta$, with $\alpha\not=\beta$. Indeed, apply first a rotation, then a joint rescaling of the time and frequency axes so that the distance between the line containing the 3 points and the remaining point becomes 1, followed by a translate along the frequency axis and then by a vertical shear. It is also clear that it is precisely the $(1,3)$ configurations (and they only), that can be reduced to the special configuration from above. This follows since any linear transformation of the plane maps collinear points into collinear points.

We can also assume that the 4 points do not sit on a lattice, since that case is covered by the work in \cite{Lin}. In particular, we assume $\alpha/\beta$ is irrational.

Assume now for contradiction that there is some nontrivial Schwartz $f:\R\to\C$ such that for a.e. $x$, for some fixed nonzero constants $C_i\in\C$
$$f(x+1)=f(x)(C_0+C_1e(\alpha x)+C_2e(\beta x)).$$
 We denote as before
$P(x)=C_0+C_1e(\alpha x)+C_2e(\beta x)$.

Let $(I,S,d)$ be a triple such that $d>0$, $I\subset [0,1)$ is an interval, $S\subset I$ is a set with full Lebesgue measure $|I|$,  such that
\begin{equation}
\label{e4}
\lim_{|n|\to\infty\atop_{n\in\Z}}|n|^Cf(x+n)=0, \text{ for each } x\in S \text{ and each }C>0,
\end{equation}
\begin{equation}
\label{e5}
d<|f(x)|<d^{-1},\;\text{ for each }x\in S\cup S+1,
\end{equation}
\begin{equation}
\label{e6}
f(x+1)=f(x)(C_0+C_1e(\alpha x)+C_2e(\beta x)),\;\text{ for each } x\in S+\Z.
\end{equation}

We will refer to $C_0,C_1, C_2,I,S,d,\alpha,\beta$ as {\em fundamental parameters}. Implicit constants in inequalities involving $\lesssim$ are allowed to depend on these parameters, since they can be thought of as being fixed for the rest of the argument.

Apply Lemma \ref{L1} with $s:=|I|/10$, to get  a constant $D=D(|I|,\alpha,\beta)>0$ and a sequence $N_k$ of positive integers going to infinity such that for each $k\ge 1$

(i) $\{P_k:=\frac{N_k}{\beta}\}<|I|/10$,

(ii) $N_k\|N_k\frac{\alpha}{\beta}\|\le D\min_{1\le n\le N_k}n\|n\frac{\alpha}{\beta}\|$,

(iii) $N_k\|N_k\frac{\alpha}{\beta}\|\le D$

By Proposition \ref{P1} with $\delta=\frac{|I|}{100}$, for each  $x\in [0,1)\setminus E_{k,\delta}$ and $x=y+P_k$ we have
\begin{equation}
\label{e8}
|\prod_{n=0}^{[P_k]-1}P(y+n)|\le P_k^L |\prod_{n=0}^{[P_k]-1}P(x+n)|.
\end{equation}
Note that $(S\setminus E_{k,\delta})\cap (\{P_k\}+ S)\not=\emptyset$, since the intersection of each of the 2 sets with $I$ is large. Pick any $x_k\in S\setminus E_{k,\delta}$, such that  also $x_k-\{P_k\}\in S$.
Define $x_k':=x_k-\{P_k\}$. We will now  argue that \eqref{e4} can not hold for both $x_k$ (with $n\to\infty$) and $x_k'$ (with $n\to-\infty$).
Let $y_k=x_k'-[P_k]=x_k-P_k$. Note that
$$\prod_{n=0}^{[P_k]-1}P(y_k+n)=\prod_{n=1}^{[P_k]}P(x_k'-n),$$
and thus, by \eqref{e8} we get
\begin{equation}
\label{e12}
|\prod_{n=1}^{[P_k]}P(x_k'-n)|\le P_k^L |\prod_{n=0}^{[P_k]-1}P(x_k+n)|.
\end{equation}
Recall that we have
\begin{equation}
\label{e13}
f(x_k+[P_k])=f(x_k){\prod_{n=0}^{[P_k]-1}P(x_k+n)}
\end{equation}
\begin{equation}
\label{e14}
f(x_k'-[P_k])=f(x_k'){\prod_{n=1}^{[P_k]}P(x_k'-n)}.
\end{equation}
Using \eqref{e5}  and \eqref{e12}, we get that
\begin{equation}
\label{e16}
|f(x_k+[P_k])f(x_k'-[P_k])|\gtrsim P_k^{-L},
\end{equation}
with an implicit constant depending only on the fundamental parameters (in particular, independent of $k$).
This clearly contradicts \eqref{e4}, if we let $k\to\infty$.
\end{proof}
\section{Proof of Theorem \ref{t:main} for (2,2) configurations}

The reader can easily check that the argument for $(1,3)$ configurations presented above also works here. In particular, the existence of large $P\in\R$ satisfying $$\max\{\|P\alpha\|,\|P\beta\|\}\lesssim 1/P$$ implies that one can run the almost periodicity argument.

We also present an alternative, simpler argument, which does not work in the (1,3) case. As before, by using metaplectic transformations we can reduce to the case of special (2,2) configurations.
Assume there is a continuous function  $f:\R\to \C$ such that $f$ is nonzero on some interval $I\in [0,1)$, satisfying a weaker assumption (minimal decay)
\begin{equation}
\label{eeeeeeeeeeeeeeeeeeeeee:ejhdfui34yr348r85ug}
\lim_{|n|\to\infty\atop_{n\in\Z}}f(x+n)=0
\end{equation}
for all $x\in [0,1]$,
and\begin{equation}
\label{eeeeeeeeeeeeeeeeeeeeee:ejhdfui34yr348r85ug'}
f(x+1)(A+Be(\alpha x))=f(x)(C+De(\beta x)),\end{equation}
for all $x\in\R$, for some fixed $A,B,C,D\in\C$, $\alpha,\beta\in\R$, none of them zero. We can also assume that $\alpha$ and $\beta$ are rationally independent ($\frac{\beta}{\alpha}\not\in\Q$), by otherwise invoking the lattice case. Note that we assume far less, namely minimal (rather than Schwartz) decay. Thus, the result we prove for special (2,2) configurations is in some sense best possible.
\bigskip

First, let us deal with the case when both $\alpha,\beta$ are irrational.
We can trivially assume $A$ is real. Write
$$P(x)=A+Be(\alpha x)\;\;\text{ and }\;\;Q(x)=C+De(\beta x).$$

We first prove that  $|A|\not=|B|$. Indeed, assume for contradiction that  $|A|=|B|$. Then $P$ has real zeros of the form
$$x_n=\omega+\frac{n}{\alpha},\;n\in\Z,$$
for some $\omega\in\R$. We split the argument in a few steps.
\medskip

Step 1. Here we observe that $Q$ must also have real zeros. If this is not the case, let $n$ be such that
$$x_n>0$$
and (since $\frac1\alpha$ is irrational)$$\{x_n\}\in I,\text{ so }f(\{x_n\})\not=0.$$
Since
$$f(x_n+1)P(x_n)=f(x_n)Q(x_n)$$
we find that $f(x_n)=0$. Using the recurrence, and inductive argument shows that $f(x_n-m)=0$ for each $m\in\N$, leading to the contradiction $f(\{x_n\})=0$. This ends Step 1.
\medskip

Step 2. Since we now know that $Q$ has real zeros, they must be of the form
$$y_m=\omega'+\frac{m}{\beta},\;m\in\Z,$$
for some $\omega'\in\R$. If the zeros of $P$ and $Q$ never share a $\Z-$orbit then we reason like in Step 1 to reach a contradiction. Even stronger, if at most one $x_n$ shares a $\Z-$orbit with some $y_m$, the argument in Step 1 still leads to a contradiction.
\medskip

Step 3. We can thus assume for the rest of the argument that there are $n\not=n'$ and $m,m'$ such that
$$x_n-y_m,\;x_{n'}-y_{m'}\in\Z.$$
Taking the difference it follows that there are $N,M,k\in\Z$ so that
\begin{equation}
\label{ fhvurthgiut8gprioivftogup}
\frac{N}{\alpha}+\frac{M}{\beta}=k,
\end{equation}
Note that our original assumption that $\alpha,\beta$ are rationally independent does not prevent this scenario from happening, so we must rule it out by a more careful analysis.

As both $\alpha$ and $\beta$ are irrational, all $N,M$ satisfying  \eqref{ fhvurthgiut8gprioivftogup} must be of the form $lN_0,lM_0$ for some fixed $N_0,M_0\in\Z\setminus\{0\}$ and arbitrary $l\in\Z$. It further follows that all $n,m$ satisfying $x_n-y_m\in\Z$ are of the form
$$\begin{cases}n=n_0+lN_0\\m=m_0+lM_0,\end{cases}$$
for some fixed $n_0,m_0,N_0,M_0\in\Z$ with $N_0,M_0\not=0$ and arbitrary $l\in\Z$. Note that for each $x_n$ there is at most one $y_m$ such that $x_n,y_m$ share an orbit. In the next step, we will argue based on the ordering of $0,x_n,y_m$ to reach the final contradiction.
\medskip

Step 4. Set $A=\omega+\frac{n_0}{\alpha}$, $B=\omega'+\frac{m_0}{\beta}$. Then the zero $X_l:=A+l\frac{N_0}{\alpha}$ of $P$ only shares the orbit with one zero of $Q$, namely  $Y_l:=B+l\frac{M_0}{\beta}$. Since $\frac{N_0}{\alpha}\not=\frac{M_0}{\beta}$, by choosing $l$ either positive or negative, we can ensure that one of the following three scenarios holds (the analysis is driven by the comparison between the signs of $\frac{N_0}\alpha,\frac{M_0}\beta$ and their magnitudes)

$\bullet\;$ $0<X_l<Y_l$ (for all large enough or small enough $l\in\Z$)

$\bullet\;$ $Y_l<0<X_l$ (for all large enough or small enough $l\in\Z$)

$\bullet\;$ $X_l<Y_l<0$ (for all large enough or small enough $l\in\Z$).

In each case, ar argument like in Step 1 applied to the orbit of $X_l$ and $Y_l$ will force $f$ to be zero at $v_l=\{X_l\}=\{Y_l\}$. Let us see this in the first scenario. Fix such an $l$. By ergodicity, we can also assume that
$$v_l\in I.$$

Using the recurrence and the fact that $Q$ is nonzero on the orbit before $Y_l$, we first find that $f(v_l+1)\not=0$, then $f(v_l+2)\not=0$, all the way up to $f(X_l)\not=0$. Using the recurrence one more time leads to the contradiction
$$0=f(X_l+1)P(X_l)=f(X_l)Q(X_l)\not=0.$$

\medskip

Thus, we have proved that $\inf_{x\in\R}|P(x)|>0$. Similarly, it follows that   $|C|\not=|D|$, and thus $\inf_{x\in\R}|Q(x)|>0$.
The functions $\psi(x)=\ln|A+Be(x)|$ and $\phi(x)=\ln|C+De(x)|$ are now guaranteed to be continuous on $[0,1)$.

Due to  \eqref{eeeeeeeeeeeeeeeeeeeeee:ejhdfui34yr348r85ug}, it follows that for each $x,z\in I$
\begin{equation}
\label{1}
\lim_{N\to\infty}\left(\sum_{n=1}^N\phi(\beta x+\beta n)-\sum_{n=1}^N\psi(\alpha x+\alpha n)\right)= -\infty
\end{equation}
and
\begin{equation}
\label{2}
\lim_{N\to\infty}\left(\sum_{n=-N}^{-1}\phi(\beta z+\beta n)-\sum_{n=-N}^{-1}\psi(\alpha z+\alpha n)\right)\to +\infty
\end{equation}

 Let $p_k,q_k$ relatively prime, $q_k\to\infty$ such that
$$\left|\beta-\frac{p_k}{q_k}\right|\le \frac{1}{q_k^2}.$$
Note that  $n\beta$ is very regular, that is
$$|n\beta-\frac{np_k}{q_k}|\le \frac{1}{q_k},\;\;-q_k\le n\le q_k,$$
where $(np_k \mod q_k)_{n=1}^{q_k}$ cover all the residues mod $q_k$. Using this and  $|\phi'|\gtrsim 1$, we get
$$
|\sum_{n=1}^{q_k}\phi(\beta y+\beta n)-q_k\int_{0}^1 \phi|=O(1)
$$
and
$$
|\sum_{n=-q_k}^{-1}\phi(\beta y+\beta n)-q_k\int_{0}^1 \phi|=O(1)
$$
for each $y\in [0,1]$ (use Riemann sums). An immediate corollary is that for each $x,z\in I$
\begin{equation}
\label{4}
|\sum_{n=1}^{q_k}\phi(\beta x+\beta n)-\sum_{n=-q_k}^{-1}\phi(\beta z+\beta n)|=O(1)
\end{equation}

Let $B=re(\theta)$, with $r>0$. By invoking Birkhoff's pointwise ergodic theorem for the function $1_I$, there exists $x\in I$ and some $n'\in \N$ such that $z:=\{-x-\frac{2\theta}{\alpha}+n'\alpha^{-1}\}\in I$. Let $y:=-x-\frac{2\theta}{\alpha}+n'\alpha^{-1}$ and $m=y-z$.

 The point of this selection is that for each $n$,
$A+Be(\alpha y-n\alpha )$ and $A+Be(\alpha x+n\alpha)$ are complex conjugates
and thus
$$\sum_{n=-N+m}^{-1+m}\psi(\alpha z+\alpha n)=\sum_{n=1}^{N}\psi(\alpha x+\alpha n).$$

This implies that for each $N$,
\begin{equation}
\label{5}
|\sum_{n=-N}^{-1}\psi(\alpha z+\alpha n)-\sum_{n=1}^{N}\psi(\alpha x+\alpha n)|=O(1).
\end{equation}
But now it immediately follows that \eqref{1}-\eqref{5} can not simultaneously hold.
\medskip

If $\alpha$ is irrational and $\beta\in\Q$, apply the conjugates trick to $P$ as above, and use the periodicity of $Q$, like in the proof of Theorem \ref{TT2'} (b) described in Section \ref{S4}.

\section{Proof of Theorem \ref{TT2}}
Let us first prove part (a) of the theorem. Part (b) is proved in the end of this section.
 The hardest case is if $\frac\alpha\beta$ is {\em Diophantine}. This means that there exists $\gamma>1$ and there exists  $\epsilon>0$ small enough to satisfy, say, $\frac{\epsilon}{\gamma+\epsilon-1}\le \frac1{10}$, such that
\begin{equation}
\label{xcniery8u3ifhtgp[r0-9}
\liminf_{n\to\infty}n^{\gamma}\|n\frac{\alpha}{\beta}\|<\infty
\end{equation}
and in addition
\begin{equation}
\label{xcniery8u3ifhtgp[r0-9ufts}
\inf_{n\in\N}n^{\gamma+\epsilon}\|n\frac{\alpha}{\beta}\|:=D_\epsilon>0.
\end{equation}
The easier case is when $\frac\alpha\beta$ is {\em Liouville}, that is if for each $\eta>1$
$$\liminf_{n\to\infty}n^{\eta}\|n\frac{\alpha}{\beta}\|<\infty.$$
\begin{lemma}\label{l7hrst}
Let $x_1,x_2,\ldots,x_N$ be $N$ not necessarily distinct real numbers.
Then for each $N\in\N$ and each $\delta>0$, there exists an exceptional set $E_{N,\delta}\subset [0,1]$ such that
$$|E_{N,\delta}|\le \delta,$$
\begin{equation}
\label{xcniery8u3ifhtgp[r0-9dvruyfg73t895ugijk}
\frac{1}{N}\sum_{n=1}^{N}\frac{1}{\|x-x_n\|}\lesssim_{\delta} \log N,
\end{equation}
\begin{equation}
\label{xcniery8u3ifhtgp[r0-9dvruyfg73t895ugijk'}
\frac{1}{N^2}\sum_{n=1}^{N}\frac{1}{\|x-x_n\|^2}\lesssim_{\delta} 1,
\end{equation}
for each $x\in [0,1]\setminus E_{N,\delta}$.
\end{lemma}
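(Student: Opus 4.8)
The plan is to prove the two bounds \eqref{xcniery8u3ifhtgp[r0-9dvruyfg73t895ugijk} and \eqref{xcniery8u3ifhtgp[r0-9dvruyfg73t895ugijk'} separately, in each case by estimating the $L^1$ norm over $[0,1]$ of the average on the left-hand side and then removing a small exceptional set via Chebyshev's inequality. Since we want a single exceptional set $E_{N,\delta}$ that works for both, I would produce two exceptional sets and take their union (and rename constants accordingly). The key point is that the $x_n$ are \emph{arbitrary} real numbers, so there is no regularity to exploit; the estimates must come purely from the fact that $x\mapsto 1/\|x-a\|$ and $x\mapsto 1/\|x-a\|^2$ are integrable away from a shrinking neighborhood of $a$, together with the normalization by $N$ (resp.\ $N^2$) and the truncation that the exceptional set provides.

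For \eqref{xcniery8u3ifhtgp[r0-9dvruyfg73t895ugijk}, I would first define $E'$ to be the set of $x\in[0,1)$ whose distance (mod $1$) to the finite set $\{x_1,\dots,x_N\}$ is less than $\delta/(10N)$; then $|E'|\le \delta/5$. For $x\notin E'$ one has $\|x-x_n\|\ge \delta/(10N)$ for every $n$, but that alone only gives $\frac1N\sum_n \frac1{\|x-x_n\|}\lesssim_\delta 1$, which is too weak by a logarithm in the wrong direction — actually it is \emph{stronger} than needed pointwise, so in fact no averaging is required for \eqref{xcniery8u3ifhtgp[r0-9dvruyfg73t895ugijk}: off $E'$, each term is at most $10N/\delta$, so the average is at most $10/\delta$, which is $\lesssim_\delta \log N$ once $N\ge 2$ (and the case $N=1$ is trivial). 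Wait — one must be slightly careful: I should instead run the genuine averaging argument so the bound is uniform and clean. Compute $\int_0^1 \frac1N\sum_{n=1}^N \frac{1}{\|x-x_n\|}\,dx = \frac1N\sum_{n=1}^N \int_0^1 \frac{dx}{\|x-x_n\|}$, and each integral $\int_0^1 dx/\|x-x_n\|$ diverges, so the honest route is to truncate first: on $[0,1)\setminus E'$ we have $\int_{[0,1)\setminus E'}\frac{dx}{\|x-x_n\|}\lesssim \log(N/\delta)\lesssim_\delta \log N$. Hence $\int_{[0,1)\setminus E'}\frac1N\sum_n\frac1{\|x-x_n\|}\,dx\lesssim_\delta \log N$, and Chebyshev yields a set $E''$ with $|E''|\le\delta/5$ outside of which \eqref{xcniery8u3ifhtgp[r0-9dvruyfg73t895ugijk} holds (after absorbing a factor $5/\delta$ into the implicit constant).

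For \eqref{xcniery8u3ifhtgp[r0-9dvruyfg73t895ugijk'} the same scheme works but now the singularity $1/\|x-x_n\|^2$ is not integrable, so the truncation by $E'$ is essential and actually drives the bound. On $[0,1)\setminus E'$ one has $\int_{[0,1)\setminus E'}\frac{dx}{\|x-x_n\|^2}\lesssim \frac{N}{\delta}$ (the integral $\int_{\delta/(10N)}^{1/2}x^{-2}dx\asymp N/\delta$), so $\int_{[0,1)\setminus E'}\frac1{N^2}\sum_{n=1}^N\frac1{\|x-x_n\|^2}\,dx\lesssim \frac1{N^2}\cdot N\cdot \frac{N}{\delta}=\frac1\delta$, and Chebyshev again produces a set $E'''$ with $|E'''|\le\delta/5$ outside of which \eqref{xcniery8u3ifhtgp[r0-9dvruyfg73t895ugijk'} holds. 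Finally I would set $E_{N,\delta}:=E'\cup E''\cup E'''$, so $|E_{N,\delta}|\le 3\delta/5\le\delta$, and both inequalities hold off $E_{N,\delta}$. The only mild obstacle is bookkeeping: making sure the $\delta$-dependence is tracked consistently through the two Chebyshev steps and that the $\log N$ (rather than $\log(N/\delta)$) really can be achieved by hiding the $\delta$ in the $\lesssim_\delta$ constant, which it can since $\log(N/\delta)\le \log N + \log(1/\delta)\lesssim_\delta \log N$ for $N\ge 2$. No genuinely hard step arises here; the lemma is a soft maximal-function-type estimate, and the real work of the paper is in applying it (with $x_n$ chosen as orbit points) elsewhere.
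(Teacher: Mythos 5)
Your final argument is correct and is essentially the same as the paper's: restrict to $[0,1]\setminus U$ where $U$ is a $\delta/(10N)$-neighborhood of $\{x_1,\dots,x_N\}$, bound the $L^1$ norm of the averaged sums over that region, and apply Chebyshev. One slip in your digression: off $E'$ each term is bounded by $10N/\delta$, so the pointwise bound on $\frac1N\sum_n 1/\|x-x_n\|$ is $10N/\delta$, not $10/\delta$; the pointwise route therefore does \emph{not} give \eqref{xcniery8u3ifhtgp[r0-9dvruyfg73t895ugijk}, and the averaging-plus-Chebyshev step you ultimately run is genuinely needed, not merely cosmetically cleaner.
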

\begin{proof}
Since $y\mapsto \|y\|$ is 1 periodic, we can assume all $x_i$ are in $[0,1]$. Since $\|x-x_i\|\gtrsim \min\{|x-x_i|,|x-(1-x_i)|\}$, by doubling the number of points if necessary, we can replace $\|x-x_i\|$ with $|x-x_i|$. Define
$$U_{N,\delta}:=\bigcup_{i=1}^{N}[x_i-\frac{\delta}{10N},x_i+\frac{\delta}{10N}],$$
and note that
$$\int_{[0,1]\setminus U_{N,\delta}}\sum_{i=1}^{N}\frac{1}{|x-x_i|}d\lesssim N(\log N+\log \frac1\delta),$$
$$\int_{[0,1]\setminus U_{N,\delta}}\sum_{i=1}^{N}\frac{1}{|x-x_i|^2}d\lesssim N^2\delta^{-1}.$$
The result now follows from Cebysev's inequality.
\end{proof}

\begin{lemma}\label{l7hrstmdncvjrhufherw;'q,.c}
Let $\alpha,\beta$ be some nonzero real numbers satisfying \eqref{xcniery8u3ifhtgp[r0-9ufts} above. Define also $M:=N^{\frac{(\gamma-1)(\gamma+\epsilon)}{\gamma+\epsilon-1}}D_\epsilon^{\frac{1}{\gamma+\epsilon-1}}$.
Then for each $\xi\in\R$ and $N\in\N$
$$\frac{1}{N^\gamma}\sum_{n=0\atop{\|n\frac{\alpha}{\beta}-\xi\|\ge \frac{1}{M}}}^{N-1}\frac{1}{\|n\frac{\alpha}{\beta}-\xi\|}\lesssim_{\alpha,\beta,\gamma,\epsilon}1.$$

\end{lemma}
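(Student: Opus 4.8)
The claim is a weighted sum estimate: removing the terms where $n\frac{\alpha}{\beta}$ is within $1/M$ of $\xi$, the remaining sum of $\frac{1}{\|n\frac{\alpha}{\beta}-\xi\|}$ over $0\le n\le N-1$ is $O(N^\gamma)$. The heuristic is that the points $\{n\frac{\alpha}{\beta}\}$, $0\le n\le N-1$, are \emph{quantitatively separated} on the circle thanks to the Diophantine hypothesis \eqref{xcniery8u3ifhtgp[r0-9ufts}: if $0\le n<m\le N-1$ then $\|(m-n)\frac{\alpha}{\beta}\|\ge D_\epsilon (m-n)^{-(\gamma+\epsilon)}\ge D_\epsilon N^{-(\gamma+\epsilon)}$, so any two of our points are separated by at least $\rho:=D_\epsilon N^{-(\gamma+\epsilon)}$. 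Hence, for any dyadic scale $\|n\frac{\alpha}{\beta}-\xi\|\sim 2^{-k}$ (with $2^{-k}\ge 1/M$), the number of such $n$ is at most $O(1 + 2^{-k}/\rho)$. The plan is to do a dyadic decomposition in the size of $\|n\frac{\alpha}{\beta}-\xi\|$, count points in each annulus using the separation, and sum the resulting geometric-type series, choosing $M$ exactly so the two competing contributions balance.

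**Execution.** First I would record the separation estimate: for distinct $n,m\in\{0,\dots,N-1\}$, \eqref{xcniery8u3ifhtgp[r0-9ufts} gives $\|(n-m)\frac{\alpha}{\beta}\| \ge D_\epsilon |n-m|^{-(\gamma+\epsilon)} \ge D_\epsilon N^{-(\gamma+\epsilon)} =: \rho$. Consequently the points $\{n\frac{\alpha}{\beta}-\xi\}$ are $\rho$-separated mod $1$. Next, for each integer $k\ge 0$ with $2^{-k}\ge \frac1M$, let $A_k := \{0\le n\le N-1 : 2^{-k-1}\le \|n\frac{\alpha}{\beta}-\xi\| < 2^{-k}\}$. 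The set $\{y : 2^{-k-1}\le \|y-\xi\|<2^{-k}\}$ is a union of two arcs of total length $\le 2^{-k}$, so by $\rho$-separation $|A_k| \le 2 + 2^{-k}/\rho \lesssim 1 + 2^{-k}\rho^{-1}$. On $A_k$ we have $\frac{1}{\|n\frac{\alpha}{\beta}-\xi\|}\le 2^{k+1}$, hence
$$\sum_{n\in A_k}\frac{1}{\|n\frac{\alpha}{\beta}-\xi\|} \lesssim 2^k\bigl(1 + 2^{-k}\rho^{-1}\bigr) = 2^k + \rho^{-1}.$$
Summing over all $k$ with $2^{-k}\ge 1/M$, i.e. $0\le k \le \log_2 M + O(1)$, the "$2^k$" terms form a geometric series dominated by their largest term $\lesssim M$, and the "$\rho^{-1}$" terms contribute $\lesssim \rho^{-1}\log M$. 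Therefore
$$\sum_{n=0,\ \|n\frac{\alpha}{\beta}-\xi\|\ge 1/M}^{N-1}\frac{1}{\|n\frac{\alpha}{\beta}-\xi\|} \lesssim M + \rho^{-1}\log M.$$

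**Choosing $M$ and finishing.** Now substitute $\rho^{-1} = D_\epsilon^{-1}N^{\gamma+\epsilon}$ and the prescribed $M = N^{\frac{(\gamma-1)(\gamma+\epsilon)}{\gamma+\epsilon-1}} D_\epsilon^{\frac{1}{\gamma+\epsilon-1}}$. A direct check shows this $M$ is (up to constants) the value that equalizes $M$ and $\rho^{-1}=D_\epsilon^{-1}N^{\gamma+\epsilon}$ after one accounts for the exponents — indeed one verifies $M \asymp_{\gamma,\epsilon} (\rho^{-1})^{?}$ so that $\rho^{-1}\cdot(\text{something})$; more usefully, one checks directly that both $M$ and $\rho^{-1}\log M$ are $\lesssim_{\alpha,\beta,\gamma,\epsilon} N^\gamma$ up to the harmless logarithmic factor, which is absorbed because the exponent $\gamma$ in the denominator $N^\gamma$ of the claim leaves room: precisely $M = o(N^\gamma)$ would be false in general, so the real check is that $M \le C_{\gamma,\epsilon} N^{\gamma}$ is \emph{not} what is needed — rather the exponent of $N$ in $M$, namely $\frac{(\gamma-1)(\gamma+\epsilon)}{\gamma+\epsilon-1}$, must be compared with $\gamma$, and one shows $\frac{(\gamma-1)(\gamma+\epsilon)}{\gamma+\epsilon-1}<\gamma$ for $\gamma>1,\epsilon>0$ (cross-multiplying reduces to $0<\epsilon\gamma$, true), while the exponent of $N$ in $\rho^{-1}$ is $\gamma+\epsilon$, which is too big — so the dominant-term bookkeeping must be redone keeping the $\log$ cutoff more carefully. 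In fact, the point is that the $\rho^{-1}$ terms only appear for $k$ with $2^{-k}\ge 1/M$, and there are only $\log M$ such $k$, but we must further note that $A_k$ is \emph{empty} unless $2^{-k}\gtrsim \rho$ forces enough room — rather, the sharper count $|A_k|\le 1 + 2^{-k}/\rho$ means for small $2^{-k}$ (close to $1/M$) the "$1$" dominates and we are summing $2^k\lesssim M$ over $\log M$ scales, total $\lesssim M\log M$; combining, the full sum is $\lesssim M\log M$, and the final computation is to verify $M\log M \lesssim_{\alpha,\beta,\gamma,\epsilon} N^\gamma$, which follows from the strict inequality of exponents above (the $\log$ being absorbed into an arbitrarily small power of $N$).

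**Main obstacle.** The genuine content is entirely in the bookkeeping of the exponents: getting the count $|A_k|\lesssim 1 + 2^{-k}\rho^{-1}$ right, identifying which regime of $k$ contributes the dominant term, and then checking that the specific $M$ in the statement — with its peculiar exponent $\frac{(\gamma-1)(\gamma+\epsilon)}{\gamma+\epsilon-1}$ — is exactly the threshold that makes the worst-case contribution $O_{\gamma,\epsilon}(N^\gamma)$. I expect the cleanest route is to optimize over the cutoff $1/M$ abstractly first (balancing $M$ against $\rho^{-1}=D_\epsilon^{-1}N^{\gamma+\epsilon}$ via $M \asymp \rho^{-1}\cdot$(factor), i.e. solving $M^{\gamma+\epsilon-1} \asymp D_\epsilon^{-1} N^{(\gamma-1)(\gamma+\epsilon)}\cdot(\dots)$), which reproduces the stated $M$, and only then plug in; this avoids guessing. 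The separation estimate itself and the dyadic sum are routine.
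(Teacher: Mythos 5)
Your proposal has a genuine gap: the uniform $\rho$-separation bound is too weak to prove the lemma. You set $\rho:=D_\epsilon N^{-(\gamma+\epsilon)}$ and use $|A_k|\lesssim 1+2^{-k}\rho^{-1}$, which after the dyadic summation produces a contribution of size $\rho^{-1}\log M\sim D_\epsilon^{-1}N^{\gamma+\epsilon}\log N$. Since $\gamma+\epsilon>\gamma$, this exceeds the target $N^\gamma$ by a factor $N^\epsilon\log N$, and no amount of bookkeeping with the logarithm rescues it — you correctly sense something is off, but the issue is not the $\log$, it is the exponent $\gamma+\epsilon$ in $\rho^{-1}$. The worst-case pairwise separation $\rho$ is attained only for pairs with $|n-m|\sim N$; it throws away the fact that indices close together produce points that are \emph{far} apart, which is exactly what the Diophantine hypothesis encodes.

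The paper uses a sharper, scale-dependent count. Fix $R$ with $1/M\le R\le D_\epsilon/2^{\gamma+\epsilon}$. If $n\frac{\alpha}{\beta}$ and $m\frac{\alpha}{\beta}$ both lie in $B(\xi,R)$ then $\|(n-m)\frac{\alpha}{\beta}\|<2R$, so \eqref{xcniery8u3ifhtgp[r0-9ufts} forces $|n-m|>(D_\epsilon/2R)^{1/(\gamma+\epsilon)}$. Ordering the indices landing in $B(\xi,R)$, consecutive ones differ by at least this, hence
$$|\{0\le n\le N-1:\ n\tfrac{\alpha}{\beta}\in B(\xi,R)\}|\ \lesssim\ N\Bigl(\frac{R}{D_\epsilon}\Bigr)^{1/(\gamma+\epsilon)}.$$
This is strictly stronger than $1+R\rho^{-1}$ for $R\ge\rho$ (the exponent $1/(\gamma+\epsilon)<1$ makes the count grow sublinearly in $R$). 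With this count the dyadic sum over $2^{-j}\in[1/M,\,D_\epsilon 2^{-(\gamma+\epsilon)}]$ becomes
$$\sum_j 2^j\cdot N\Bigl(\frac{2^{-j}}{D_\epsilon}\Bigr)^{1/(\gamma+\epsilon)}=N D_\epsilon^{-1/(\gamma+\epsilon)}\sum_j 2^{j(1-1/(\gamma+\epsilon))},$$
a geometric series with ratio $>1$, dominated by its largest term $2^j\sim M$. This gives $\lesssim N D_\epsilon^{-1/(\gamma+\epsilon)}M^{(\gamma+\epsilon-1)/(\gamma+\epsilon)}$, and substituting $M=N^{\frac{(\gamma-1)(\gamma+\epsilon)}{\gamma+\epsilon-1}}D_\epsilon^{\frac{1}{\gamma+\epsilon-1}}$ yields exactly $N^\gamma$ (the $D_\epsilon$ powers cancel). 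The contribution from $n$ with $\|n\frac{\alpha}{\beta}-\xi\|\ge D_\epsilon/2^{\gamma+\epsilon}$ is trivially $\lesssim_{D_\epsilon,\gamma,\epsilon}N\le N^\gamma$. So the essential missing idea is to replace the crude $\rho$-separation by the radius-dependent count derived directly from the Diophantine hypothesis; once you do, the choice of $M$ is precisely the scale at which the geometric sum saturates.
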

\begin{proof}
The crucial observation is that for each $\frac{D_\epsilon}{2^{\gamma+\epsilon}}\ge R\ge \frac{1}{M}$, the $\|\cdot\|$-ball
$$B(\xi,R):=\{x:\|x-\xi\|<R\}$$
contains at most $N(\frac RD_\epsilon)^{\frac1{\gamma+\epsilon}}$ points $n\frac{\alpha}{\beta}$, with $0\le n\le N-1$. Indeed, assume for contradiction that $N(\frac RD_\epsilon)^{\frac1{\gamma+\epsilon}}+1$ such points are contained in the ball. Then,  two of these points would correspond to some $0\le n<m\le N-1$ with $|n-m|\le (\frac{D_\epsilon}{R})^{\frac1{\gamma+\epsilon}}$. Thus,
$$(m-n)^{\gamma+\epsilon}\|(m-n)\frac{\alpha}{\beta}\|\le D_\epsilon,$$
which contradicts \eqref{xcniery8u3ifhtgp[r0-9ufts}.

In particular, for each $\frac{D_\epsilon}{2^{\gamma+\epsilon}}\ge 2^{-j}\ge \frac1M$ we have  $O(N(\frac1{2^jD_\epsilon})^{\frac1{\gamma+\epsilon}})$ points in the ball $B_j:=B(\xi,2^{-j})$.
Define for such a $j$
$$S_j:=\{0\le n\le N-1:n\frac\alpha\beta\in B_{j+1}\setminus B_j\}.$$
Then, using the fact that there are at most $N$ points outside the ball $B(\xi,\frac{D_\epsilon}{2^{\gamma+\epsilon}})$, we get
\begin{equation*}
\label{e1}
\frac{1}{N^\gamma}\sum_{n=0\atop\|n\frac{\alpha}{\beta}-\xi\|\ge \frac{1}{M}}^{N-1}\frac{1}{\|n\frac{\alpha}{\beta}-\xi\|}\le
\frac{1}{N^\gamma}\left(\sum_{\frac{D_\epsilon}{2^{\gamma+\epsilon}}\ge 2^{-j}\ge \frac{1}{M}}\sum_{n\in S_j}\frac{1}{\|n\frac{\alpha}{\beta}-\xi\|}+N\frac{2^{\gamma+\epsilon}}{D_\epsilon}\right)
\end{equation*}
$$\lesssim_{D_\epsilon,\gamma} \frac{1}{N^\gamma}\sum_{ 2^{-j}\ge \frac{1}{M}}2^jN(\frac1{2^jD_\epsilon})^{\frac1{\gamma+\epsilon}}\lesssim_{\alpha,\beta,\gamma,\epsilon} 1.$$
\end{proof}

We have the following analogue of Lemma \ref{l7}
\begin{lemma}\label{l7'}
Let $C_0,C_1,C_2\in \C$ and $\alpha,\beta$ be some nonzero numbers. Let $\gamma$ satisfy \eqref{xcniery8u3ifhtgp[r0-9ufts} if $\alpha/\beta$ is Diophantine and let $\gamma=2$ if $\alpha/\beta$ is Liouville. Define
$$P(x)=C_0+C_1e(\alpha x)+C_2e(\beta x).$$
Then for each $N\in \N$ and  $\delta>0$, there exists an exceptional set $E_{N,\delta}\subset [0,1]$ such that
$$|E_{N,\delta}|<\delta$$
and
$$\frac{1}{N^\gamma}\sum_{n=0}^{N-1}\frac{1}{|P(x+n)|}\lesssim_{\delta,C_0,C_1,C_2,\alpha,\beta,\epsilon} 1,$$
for each $x\in [0,1]\setminus E_{N,\delta}$.
\end{lemma}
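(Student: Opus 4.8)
The plan is to follow the strategy of Lemma~\ref{l7} in a streamlined form: instead of chopping $[0,1)$ into level sets of the two integer parts (which here would cost a logarithm), I average against a trivial change of variables, turning the two-dimensional sum into the one-dimensional sum controlled by Lemma~\ref{l7hrstmdncvjrhufherw;'q,.c}, and use \eqref{xcniery8u3ifhtgp[r0-9ufts} to handle the few anomalously close returns.

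First reduce via Lemma~\ref{L4}: if $|C_0|,|C_1|,|C_2|$ cannot form a triangle then $|P(x+n)|\gtrsim_{C_0,C_1,C_2}1$ and we are done, and otherwise, with $(\gamma_1^{(k)},\gamma_2^{(k)})$, $k\in\{1,2\}$, the real zeros of $p(x,y)=C_0+C_1e(x)+C_2e(y)$, Lemma~\ref{L4} gives $\tfrac1{|P(x+n)|}\lesssim_{C_0,C_1,C_2}\sum_{k}\tfrac1{A_n^{(k)}(x)}$ with $A_n^{(k)}(x):=\|\alpha(x+n)-\gamma_1^{(k)}\|^2+\|\beta(x+n)-\gamma_2^{(k)}\|^2$; so, taking a union of two exceptional sets and halving $\delta$, it suffices to treat a single zero $(\gamma_1,\gamma_2)$ and prove $\tfrac1{N^\gamma}\sum_{n=0}^{N-1}\tfrac1{A_n(x)}\lesssim1$ off a set of measure $\le\delta$, where $A_n(x):=A_0(x+n)$, $A_0(y):=\|\alpha y-\gamma_1\|^2+\|\beta y-\gamma_2\|^2$. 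If $\alpha/\beta$ is Liouville, so $\gamma=2$, this is immediate from Lemma~\ref{l7hrst}: $A_n(x)\ge\|\alpha(x+n)-\gamma_1\|^2$, so \eqref{xcniery8u3ifhtgp[r0-9dvruyfg73t895ugijk'} applied to the $N$ points $x_n:=\gamma_1-\alpha n$, after pulling the exceptional set back from the variable $\{\alpha x\}$ to $x$ (an $O_\alpha(1)$ inflation of its measure), does the job. So assume from now on that $\alpha/\beta$ is Diophantine, i.e. \eqref{xcniery8u3ifhtgp[r0-9ufts} holds with the stated $\gamma,\epsilon,D_\epsilon$.

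The engine is the identity $\sum_{n=0}^{N-1}\int_{[0,1)\setminus E}\tfrac{dx}{A_n(x)}=\int_{[0,N)\setminus\widehat E}\tfrac{dy}{A_0(y)}$, valid for any $E\subset[0,1)$ with $\widehat E:=\bigcup_{n<N}(n+E)$, which follows from $y=x+n$; together with Chebyshev's inequality it reduces everything to producing $E\subset[0,1)$ with $|E|\le\delta/2$ and $\int_{[0,N)\setminus\widehat E}\tfrac{dy}{A_0(y)}\lesssim_{\alpha,\beta,\gamma,\epsilon,\delta}N^\gamma$. Partition $[0,N)$ into the $\asymp_\beta N$ intervals $J_m:=[\tfrac{\gamma_2+m-1/2}{\beta},\tfrac{\gamma_2+m+1/2}{\beta})$, $m\in I_N$, on each of which $\|\beta y-\gamma_2\|=\beta|y-y^*_m|$, $y^*_m:=\tfrac{\gamma_2+m}{\beta}$, and set $\xi:=\gamma_1-\gamma_2\tfrac\alpha\beta$, $\xi_m:=\|\alpha y^*_m-\gamma_1\|=\|m\tfrac\alpha\beta-\xi\|$. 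Bounding $A_0$ below by $\|\alpha y-\gamma_1\|^2\ge(\xi_m-|\alpha(y-y^*_m)|)^2$ near $y^*_m$ and by $\beta^2(y-y^*_m)^2$ elsewhere on $J_m$, a routine computation gives $\int_{J_m}\tfrac{dy}{A_0(y)}\lesssim_{\alpha,\beta}\tfrac1{\xi_m}$ whenever $\xi_m>0$. It then remains to sum $\tfrac1{\xi_m}$ over $m\in I_N$, and this is where the Diophantine input enters. Put $N':=|I_N|$, $M':=(N')^{(\gamma-1)(\gamma+\epsilon)/(\gamma+\epsilon-1)}D_\epsilon^{1/(\gamma+\epsilon-1)}$, and call $m$ \emph{good} if $\xi_m\ge1/M'$ and \emph{bad} otherwise. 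After an affine reindexing making $I_N=\{0,\dots,N'-1\}$ (which merely shifts $\xi$), Lemma~\ref{l7hrstmdncvjrhufherw;'q,.c} applies directly and gives $\sum_{\mathrm{good}\ m}\tfrac1{\xi_m}\lesssim_{\alpha,\beta,\gamma,\epsilon}(N')^\gamma\lesssim_{\alpha,\beta}N^\gamma$, with no excision needed. For the bad $m$, the pigeonhole behind that lemma (any two indices with $\|m\tfrac\alpha\beta-\xi\|<1/M'$ must differ by more than $(D_\epsilon M'/2)^{1/(\gamma+\epsilon)}$, by \eqref{xcniery8u3ifhtgp[r0-9ufts}) shows their number is $\lesssim_{\alpha,\beta,D_\epsilon}(N')^{\epsilon/(\gamma+\epsilon-1)}\le N^{1/10}$, the last step being exactly where $\tfrac\epsilon{\gamma+\epsilon-1}\le\tfrac1{10}$ is used. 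Now take $F:=\bigcup_{\mathrm{bad}\ m}\{y\in[0,N):|y-y^*_m|<2\eta\}$ with $\eta:=c_{\alpha,\beta}\delta N^{-1/10}$ and $E:=\{\{y\}:y\in F\}\subset[0,1)$; then $\widehat E\supseteq F$, so it is enough to bound $\int_{[0,N)\setminus F}\tfrac{dy}{A_0(y)}$, while $|E|\le|F|\lesssim_{\alpha,\beta}N^{1/10}\eta\le\delta/2$ for $c_{\alpha,\beta}$ small. One checks $\eta>\xi_m$ for bad $m$ (using $\tfrac{(\gamma-1)(\gamma+\epsilon)}{\gamma+\epsilon-1}\ge\tfrac9{10}$, again from the hypothesis on $\epsilon$), so the good $J_m$ stay clear of $F$; and on $J_m\setminus F$ for bad $m$ one has $A_0(y)\ge\beta^2(y-y^*_m)^2$ with $|y-y^*_m|\ge2\eta$, hence $\int_{J_m\setminus F}\tfrac{dy}{A_0(y)}\lesssim_\beta\tfrac1\eta$ and the bad total is $\lesssim_{\alpha,\beta}N^{1/10}\eta^{-1}\lesssim_{\alpha,\beta,\delta}N^{1/5}\ll N^\gamma$. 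Adding the good and bad contributions finishes the estimate.

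The main obstacle is precisely the bad $m$: the few $n$ for which the orbit point $(\alpha(x+n),\beta(x+n))$ approaches $(\gamma_1,\gamma_2)$ anomalously closely modulo $\Z^2$. The reason they can be absorbed is that $\alpha/\beta$ is only mildly well approximable, so \eqref{xcniery8u3ifhtgp[r0-9ufts} forces these near-returns to be polynomially sparse ($\le N^{1/10}$ of them), and excising orbit pieces of total length $O(\delta)$ (each of length $\sim\delta N^{-1/10}$) kills their contribution after division by $N^\gamma$; everything else is soft, the change of variables having reduced the problem to the one-dimensional sum controlled by Lemma~\ref{l7hrstmdncvjrhufherw;'q,.c}.
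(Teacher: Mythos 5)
Your proof is correct, and it takes a genuinely different route from the paper's in the Diophantine case (the Liouville case is handled the same way in both). The paper keeps the full lower bound from Lemma~\ref{L4}, including the linear term $\|\alpha(x+n)-\gamma_1+t\langle\beta(x+n)-\gamma_2\rangle\|$, and then splits into the subcases $\alpha+t\beta\neq0$ (where the linear term reduces to a one-variable sum controlled by Lemma~\ref{l7hrst} with a $\log N$ bound) and $\alpha+t\beta=0$ (where the linear term becomes $\|\frac{\alpha}{\beta}L-\xi'\|$ for integer $L$ and Lemma~\ref{l7hrstmdncvjrhufherw;'q,.c} plus an excision of the close-return indices is used). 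You instead discard the linear term entirely, keeping only the quadratic part $Q_n(x)=\|\alpha(x+n)-\gamma_1\|^2+\|\beta(x+n)-\gamma_2\|^2$ of the Lemma~\ref{L4} bound, and perform the change of variables $y=x+n$ to turn the average over $x\in[0,1)$ of the sum over $n$ into the single integral $\int_{[0,N)}\frac{dy}{Q_0(y)}$. Partitioning $[0,N)$ into the $1/\beta$-length intervals $J_m$ centered at the zeros $y_m^*$ of $\|\beta y-\gamma_2\|$, the pointwise estimate $\int_{J_m}\frac{dy}{Q_0(y)}\lesssim_{\alpha,\beta}\frac{1}{\xi_m}$ (which one verifies, e.g., by completing the square, since $Q_0$ restricted to $J_m$ is comparable to a shifted parabola with minimum $\asymp\xi_m^2$) reduces the whole thing to $\sum_m\frac{1}{\xi_m}$ with $\xi_m=\|m\frac{\alpha}{\beta}-\xi\|$, and from there the argument is identical to the paper's $\alpha+t\beta=0$ subcase: Lemma~\ref{l7hrstmdncvjrhufherw;'q,.c} controls the good $m$ and the $\lesssim N^{1/10}$ bad $m$ are handled by excision. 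Your route is slightly stronger (it proves the estimate with $Q_n$ rather than $Q_n+L_n$ in the denominator) and avoids both the case split and any use of the parameter $t$; the paper's route exploits the extra linear term, which in the non-degenerate subcase buys the much cheaper $N\log N$ bound rather than $N^\gamma$. Both strategies ultimately rest on the same Diophantine input. The remark that $\eta>\xi_m$ for bad $m$ is not needed — for $N$ large the excision balls $\{|y-y_m^*|<2\eta\}$ are already contained in their own $J_m$, so they cannot touch the good $J_m$ — but it does no harm.
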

\begin{proof}

Let $(\gamma_1,\gamma_2)$ be a zero of the polynomial $p(x,y)=C_0+C_1e(x)+C_2e(y)$, and let $t$ be the real number guaranteed by Lemma \ref{L4}. Define
$$A_n(x):=\|\alpha (x+n)-\gamma_1+t\langle\beta (x+n)-\gamma_2\rangle\|+\|\alpha (x+n)-\gamma_1\|^2+\|\beta (x+n)-\gamma_2\|^2$$
By Lemma \ref{L4}, it suffices to find an exceptional set with $|E_{N,\delta}|\le \frac\delta2,$ such that
\begin{equation}
\label{equ:er5ahyso'}
\frac{1}{N^\gamma}\sum_{n=0}^{N-1}\frac{1}{A_n(x)}\lesssim_{\delta,t,\alpha,\beta} 1,
\end{equation}
for each $x\in [0,1]\setminus E_{N,\delta}$.

We first analyze the case when $\alpha/\beta$ is Diophantine.
We distinguish two subcases. First, let us analyze the case $\alpha+t\beta\not=0$. In this case,
$$\|\alpha (x+n)-\gamma_1+t\langle \beta (x+n)-\gamma_2\rangle\|=\|(\alpha+t\beta)x+(\alpha+t\beta)n-\gamma_1-t\gamma_2-t[\beta (x+n)-\gamma_2]+mt\|,$$
where $m=-1$ if  $\{\beta (x+n)-\gamma_2\}>1/2$ and $m=0$ otherwise. Note that the set of points
$$S:=$$$$\{(\alpha+t\beta)n-\gamma_1-t\gamma_2-t[\beta (x+n)-\gamma_2]+mt:\;x\in[0,1],\; 0\le n\le N-1,\;m\in\{0,-1\}\}$$
has $O_\beta(N)$ elements. \eqref{xcniery8u3ifhtgp[r0-9dvruyfg73t895ugijk} implies the existence of $E_{N,\delta}$ with $|E_{N,\delta}|<\delta/2$ and
$$\frac{1}{N}\sum_{y\in S}\frac{1}{\|(\alpha+t\beta)x+y\|}\lesssim_{\delta,\alpha,\beta} \log N$$
for each $x\in [0,1]\setminus E_{N,\delta}$. Thus, \eqref{equ:er5ahyso'} follows, even with  $N\log N$ instead of $N^\gamma$ in the denominator.

Let us now analyze the subcase $\alpha+t\beta=0$. Now,
$$\|\alpha (x+n)-\gamma_1+t\langle \beta (x+n)-\gamma_2\rangle\|=\|-\gamma_1-t\gamma_2+mt+\frac{\alpha}{\beta}[\beta (x+n)-\gamma_2]\|,$$
where $m$ is as before. Let $\xi$ be either $\gamma_1+t\gamma_2$ or $\gamma_1+t\gamma_2+t$. Let $$M:=(N\beta)^{\frac{(\gamma-1)(\gamma+\epsilon)}{\gamma+\epsilon-1}}D_\epsilon^{\frac{1}{\gamma+\epsilon-1}}.$$ From Lemma \ref{l7hrstmdncvjrhufherw;'q,.c} we have that for each $x\in [0,1)$
\begin{equation}
\label{jfhury40rlvbk5yih9}
\frac{1}{N^{\gamma}}\sum_{n=0\atop{\|\frac{\alpha}{\beta}[\beta (x+n)-\gamma_2]-\xi\|\ge \frac{1}{M}}}^{N-1}\frac1{\|\frac{\alpha}{\beta}[\beta (x+n)-\gamma_2]-\xi\|}\lesssim_\beta \frac{1}{N^\gamma}\sum_{n=0\atop{\|\frac{\alpha}{\beta}n-\xi\|\ge \frac{1}{M}}}^{N\beta}\frac1{\|\frac{\alpha}{\beta}n-\xi\|}\lesssim 1.
\end{equation}

Let $S(\xi)$ be the set of those $0\le n\le N-1$ such that $\|\frac{\alpha}{\beta}n-\xi\|\le \frac{1}{M}$. It was proved in Lemma \ref{l7hrstmdncvjrhufherw;'q,.c} that $S(\xi)$ has at most $N(\frac 1{MD_\epsilon})^{\frac1{\gamma+\epsilon}}$ points. This is  $O_{\alpha,\beta}(N^{1/10})$, since $\frac{\epsilon}{\gamma+\epsilon-1}\le \frac1{10}$.
Define
$$E_{N,\delta}:=\{x\in[0,1]:\|\alpha(x+n)-\gamma_1\|\lesssim_{\alpha,\beta}  \delta N^{-\frac1{10}}\text{ for some }n\in S(\xi)\},$$ and note that $|E_{N,\delta}|\le\frac\delta2$.
 For $n\in S(\xi)$ we will use the alternative estimate
$$A_n(x)\ge \|\alpha (x+n)-\gamma_1\|^2.$$
Thus, if $x\notin E_{N,\delta}$, then  $A_n(x)\gtrsim \delta^{2}N^{-\frac15}$, and thus
\begin{equation}
\label{jfhury40rlvbk5yih9'}
\frac{1}{N^\gamma}\sum_{n=0\atop{n\in S(\xi)}}^{N-1}\frac{1}{A_n(x)}\lesssim_{\alpha,\beta}  \frac{1}{N^\gamma}\delta^{-2}N^{\frac15}N^{\frac1{10}}\lesssim_{\delta,\alpha,\beta} 1.
\end{equation}
The result now follows from \eqref{jfhury40rlvbk5yih9} and \eqref{jfhury40rlvbk5yih9'}.

Finally, assume that $\alpha/\beta$ is Liouville. Then the result follows right away from  \eqref{xcniery8u3ifhtgp[r0-9dvruyfg73t895ugijk'} and the fact that $A_n(x)\ge \|\alpha (x+n)-\gamma_1\|^2.$

\end{proof}
We now begin the final part of the argument of part (a) of Theorem \ref{TT2}.
Let now $\gamma$ satisfy \eqref{xcniery8u3ifhtgp[r0-9} and \eqref{xcniery8u3ifhtgp[r0-9ufts} if $\alpha/\beta$ is Diophantine, and let $\gamma=2$ if $\alpha/\beta$ is Liouville.

Let  $S\subset [0,1)$ be a set that satisfies \eqref{e4}-\eqref{e6}, where now  $C=0$ (thus minimal decay is assumed). Note that since $f$ is no longer assumed to be continuous, all we can guarantee about $S$ is that it has positive measure. This however is enough to guarantee -via a classical result-  that $1_S*1_{-S}(0)>0$. Since $1_S*1_{-S}$ is continuous, there must exist an interval $I$ centered at the origin and $\delta>0$, such that $1_S*1_{-S}(p)>\delta$ on $I$. This implies that for each $p\in -I$,
$$|\{x\in S:x+p\in S\}|>\delta.$$
It is automatic (by pigeonholing as before) that there exists a sequence of integers $N_k\to\infty$ and $D=D(\alpha,\beta,s)\in (0,\infty)$ such that

(i) $\{P_k:=\frac{N_k}{\beta}\}<|I|/2$,

(ii) $N_k^{\gamma}\|N_k\frac{\alpha}{\beta}\|\le D$

 Let $E_{[P_k],\delta}$ be the exceptional set guaranteed by Lemma \ref{l7'}. This set will depend on $k$, but this dependence will not be relevant.
Then the set
$S_k:=\{x\in S:x-\{P_k\}\in S\}\cap ([0,1]\setminus E_{[P_k],\delta})$ is nonempty for each $k\ge 1$. Pick some $x_k\in S_k$, and finish the argument exactly like in the proof of Theorem \ref{t:main}, by working on the orbits of $x_k$ and $x_k-\{P_k\}$. In particular, note that by the argument in Proposition \ref{P1},
$$|\prod_{n=0}^{[P_k]-1}P(x_k-P_k+n)|\lesssim |\prod_{n=0}^{[P_k]-1}P(x_k+n)|,$$
with an implicit constant depending only on the fundamental parameters.

We caution that some implicit constants will now also depend on $\gamma$, $\epsilon$ and $\delta$. This is tolerable, since these   only depend on the  fundamental parameters.

Part (b) of Theorem \ref{TT2} is much simpler. Indeed, if say, $\alpha$ is rational, then
$$\|\alpha(x+n)-\gamma_1\|\gtrsim_{\delta,\alpha} 1$$ for each $n\in\N$ and each $x$ outside some $E_\delta$ with measure $\le \delta$. Lemma \ref{L4} implies that $|p(\alpha(x+n),\beta(x+n))|\gtrsim_{\delta,\alpha} 1$ for each $x\notin E_\delta$. In particular, the estimate in Lemma \ref{l7} $$\frac{1}{N}\sum_{n=0}^{N-1}\frac{1}{|P(x+n)|}\lesssim_{\delta,C_0,C_1,C_2,\alpha,\beta} 1$$
holds trivially (this time with $\gamma=1$). The rest of the argument is the same as in the proof of Theorem \ref{t:main}. One would have to apply the inequality above for $N=N_k$ satisfying
$$\sup_kN_k\|N_k\frac{\alpha}{\beta}\|\lesssim 1.$$

\section{Proof of Theorem \ref{TT2'}}
\label{S4}
The proof of part (a) of Theorem \ref{TT2'} follows the same general pattern as the proof of Theorem \ref{TT2}, but it is significantly simpler. We briefly sketch the details for part (a). As before, given $s>0$, there exists a sequence $N_k\to\infty$ of positive integers satisfying

(i) $\{P_k:=\frac{N_k}{\beta}\}<s$,

(ii) $\sup_kN_k\log N_k\|N_k\frac{\alpha}{\beta}\|\lesssim 1$

We have the following analogue of Proposition \ref{P1}.
\begin{proposition}
\label{P1bjfjghrthuy457t585urhrgyfgety}
Let $(N_k)$ be a sequence such that (ii) above holds, and define $P_k:=\frac{N_k}{\beta}$. Let $P(x)=A+Be(\alpha x)$, $Q(x)=C+De(\beta x)$, $R(x)=\frac{P(x)}{Q(x)}$. Given $\delta>0$,  there exists $E_{k,\delta}\subset [0,1]$ such that $|E_{k,\delta}|\le \delta$ such that  for each $x,y$ satisfying $x\in [0,1)\setminus E_{k,\delta}$ and $x=y+P_k$, we have
$$|\prod_{n=0}^{[P_k]-1}R(y+n)|\lesssim_{A,B,C,D,\alpha,\beta,\delta} |\prod_{n=0}^{[P_k]-1}R(x+n)|.$$
\end{proposition}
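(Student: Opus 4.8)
The plan is to imitate the proof of Proposition \ref{P1}, replacing the single polynomial $P$ by the ratio $R=P/Q$. Write $\frac1{M_k}:=\frac{P_k\|P_k\frac\alpha\beta\|}{D}$, which by (ii) satisfies $\frac1{M_k}\lesssim \frac1{N_k\log N_k}\asymp\frac1{P_k\log P_k}$, so that $M_k\gtrsim P_k\log P_k$. Since $x=y+P_k$ we have $|e(\alpha x)-e(\alpha y)|=|e(\alpha P_k)-1|\le\frac{C'}{M_kP_k}$ and likewise $|e(\beta x)-e(\beta y)|\le\frac{C'}{M_kP_k}$ (here $\beta P_k=N_k\in\Z$, so the second quantity is in fact $0$; I would keep the estimate in this form to mirror the $(1,3)$ argument). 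Hence for every $n$, $\bigl||P(y+n)|-|P(x+n)|\bigr|\lesssim \frac1{M_kP_k}$ and $\bigl||Q(y+n)|-|Q(x+n)|\bigr|\lesssim\frac1{M_kP_k}$.

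From these pointwise comparisons I would pass to logarithms of products. Using $a+b\le a e^{b/a}$ for $a,b>0$ exactly as in Proposition \ref{P1},
\[
\Bigl|\prod_{n=0}^{[P_k]-1}R(y+n)\Bigr|\le \Bigl|\prod_{n=0}^{[P_k]-1}R(x+n)\Bigr|\,
\exp\!\Bigl(\frac{C''}{M_kP_k}\sum_{n=0}^{[P_k]-1}\frac1{|P(x+n)|}\Bigr)\,
\exp\!\Bigl(\frac{C''}{M_kP_k}\sum_{n=0}^{[P_k]-1}\frac1{|Q(x+n)|}\Bigr),
\]
since $|R(y+n)|=\frac{|P(y+n)|}{|Q(y+n)|}\le \frac{|P(x+n)|+\frac{C'}{M_kP_k}}{|Q(x+n)|}\cdot\frac{|Q(x+n)|}{|Q(x+n)|-\frac{C'}{M_kP_k}}$ and each factor is controlled by the corresponding exponential. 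So it suffices to bound the two averaged sums $\frac1{M_kP_k}\sum_{n}\frac1{|P(x+n)|}$ and $\frac1{M_kP_k}\sum_n\frac1{|Q(x+n)|}$ by $O(1)$ off an exceptional set of measure $\le\delta$.

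The term involving $Q(x)=C+De(\beta x)$ is the easy one: because $\beta P_k=N_k\in\Z$ and $P_k=N_k/\beta$, the points $\beta(x+n)=\beta x+n\beta$ for $0\le n\le[P_k]-1$ are (up to harmless boundary effects) the uniform shifts $\beta x+\tfrac{n}{P_k}N_k\beta$... more to the point, $Q$ is a one-variable trigonometric polynomial whose zero set in $x$ is a single point mod $1/\beta$, so $\frac1{[P_k]}\sum_{n=0}^{[P_k]-1}\frac1{\|\beta(x+n)-\gamma\|}\lesssim_\delta \log P_k$ off a set of measure $\le\delta/2$ by Lemma \ref{l7hrst} applied to the (at most $O_\beta(P_k)$) points $\{\beta(x+n)\}$; since $M_k\gtrsim P_k\log P_k$ this gives $\frac1{M_kP_k}\sum_n\frac1{|Q(x+n)|}\lesssim_\delta 1$. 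The term involving $P(x)=A+Be(\alpha x)$ is handled identically with $\alpha$ in place of $\beta$, again via Lemma \ref{l7hrst}, because the relevant zero set of $P$ is a single point mod $1/\alpha$ and the constraint linking $\alpha$ and $\beta$ through $\|N_k\frac\alpha\beta\|$ is not even needed here — the gain $M_k\gtrsim P_k\log P_k$ alone absorbs the $\log P_k$ loss. Taking $E_{k,\delta}$ to be the union of the two exceptional sets (each of measure $\le\delta/2$) finishes the proof.

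The main obstacle is essentially bookkeeping rather than a genuine difficulty: one must check that the boundary integer-part fluctuations in $\{\alpha(x+n)\}$ and $\{\beta(x+n)\}$ only split $[0,1)$ into $O_{\alpha,\beta}(1)$ subintervals on each of which the counting reduces to Lemma \ref{l7hrst}, and that the factor $\frac{|Q(x+n)|}{|Q(x+n)|-\frac{C'}{M_kP_k}}$ is genuinely $\le e^{O(1/(M_kP_k|Q(x+n)|))}$ — which requires $\frac{C'}{M_kP_k}\le\frac12|Q(x+n)|$, automatic once $x$ avoids the small neighborhood of the zero of $Q$ already excised into $E_{k,\delta}$. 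Unlike the $(1,3)$ case of Lemma \ref{l7}, no two-dimensional equidistribution or Diophantine input on $\frac\alpha\beta$ is required, which is exactly why the $(2,2)$ proposition is stated with the weaker hypothesis $N_k\log N_k\|N_k\frac\alpha\beta\|\lesssim 1$.
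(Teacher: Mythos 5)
Your proof is correct and follows the same overall strategy as the paper's: pass from pointwise perturbation bounds to an exponential product comparison, then control the averaged reciprocal sums of $|P|$ and $|Q|$ off a small exceptional set using a $\log$-averaging lemma, and absorb the $\log$ loss by the gain coming from hypothesis (ii).

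A few comparison points are worth noting. The paper cites Lemma \ref{l7} for the bound $\frac{1}{N_k}\sum_n\frac{1}{|P(x+n)|}+\frac{1}{N_k}\sum_n\frac{1}{|Q(x+n)|}\lesssim\log N_k$; strictly speaking Lemma \ref{l7} is stated for the three-term polynomial $C_0+C_1e(\alpha x)+C_2e(\beta x)$ with all coefficients nonzero, so the citation is loose. Your appeal instead to the one-dimensional Lemma \ref{l7hrst} (estimate \eqref{xcniery8u3ifhtgp[r0-9dvruyfg73t895ugijk}) is the cleaner and formally correct reference here, since $P$ and $Q$ are single-variable trigonometric binomials whose zero sets are isolated points with linear vanishing, and the only modification needed is the $O_{\alpha,\beta}(1)$-piece splitting to account for integer parts, which you flag. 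You also correctly observe that $\beta P_k=N_k\in\Z$ forces $|e(\beta P_k)-1|=0$, so the $Q$-displacement is identically zero; the paper does not make this observation and treats $Q$ symmetrically with $P$, which is harmless but slightly wasteful.

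One arithmetic slip: with $\frac1{M_k}:=\frac{P_k\|P_k\alpha\|}{D}$ (the quantity you actually use, since $P_k\alpha=N_k\alpha/\beta$ is what governs $|e(\alpha P_k)-1|$), hypothesis (ii) gives $\|P_k\alpha\|=\|N_k\tfrac\alpha\beta\|\lesssim\frac{1}{N_k\log N_k}$, hence $\frac1{M_k}\lesssim\frac{1}{\log P_k}$, i.e.\ $M_k\gtrsim\log P_k$, not $M_k\gtrsim P_k\log P_k$ as you wrote. This does not affect the argument: what you actually use is $\frac1{M_kP_k}\lesssim\frac{1}{P_k\log P_k}$, which is correct, and it is exactly enough to cancel the $\log P_k$ from Lemma \ref{l7hrst}. (The confusion likely comes from the paper itself, where Proposition \ref{P1} writes $\frac1{M_k}:=\frac{P_k\|P_k\alpha/\beta\|}{D}$, which is inconsistent with Lemma \ref{l7}'s $\frac1{M_k}:=\frac{N_k\|N_k\alpha/\beta\|}{D}$ and appears to be a typo.)
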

\begin{proof}

Lemma \ref{l7} implies  that  there exists an exceptional set $E_{k,\delta}\subset [0,1]$ such that
\begin{equation}
\label{yweytcn90r8t5t[hlpl;'lh;.;h,nfgfhjghjjhhfgj}
\frac{1}{N_k}\sum_{n=0}^{N_k}\frac{1}{|P(x+n)|}+\frac{1}{N_k}\sum_{n=0}^{N_k}\frac{1}{|Q(x+n)|}\lesssim_{\delta,\alpha,\beta,A,B,C,D}\log N_k,
\end{equation}
for each $x\in [0,1]\setminus E_{k,\delta}$. If $x=y+P_k$, we have
$$|e(\alpha x)-e(\alpha y)|=|e(P_k\alpha)-1|\lesssim \frac{1}{P_k\log P_k}$$
$$|e(\beta x)-e(\beta y)|=|e(P_k\beta)-1|\lesssim \frac{1}{P_k\log P_k}.$$
Thus, for each $n\in \N$
$$|P(y+n)|\le |P(x+n)|+O(\frac{1}{P_k\log P_k}),$$
$$|Q(y+n)|\ge |Q(x+n)|-O(\frac{1}{P_k\log P_k}).$$
Use the fact $a+b\le ae^{\frac{b}{a}}$ for each $a,b>0$ to get
$$|R(y+n)|\le |R(x+n)|e^{\frac{1}{P_k\log P_k |P(x+n)|}+\frac{1}{P_k\log P_k |Q(x+n)|}}.$$
The result now follows from \eqref{yweytcn90r8t5t[hlpl;'lh;.;h,nfgfhjghjjhhfgj}.
\end{proof}
The rest of the argument for part (a) is like in the proof of Theorem \ref{TT2}.

The almost periodicity argument is ineffective for  part (b) of Theorem \ref{TT2'}, because of \eqref{yweytcn90r8t5t[hlpl;'lh;.;h,n}. Our argument combines instead  the conjugates trick from the proof Theorem \ref{t:main} for (2,2) configurations, with a periodicity argument.
Assume there is a measurable function  $f:\R\to \C$, some $d\in (0,\infty)$ and some $S\subset [0,1]$ with positive measure such that
\begin{equation}
\label{eeeeeeeeeeeeeeeeeeeeeee2}
d^{-1}<|f(x)|<d\;\;\text{for almost every  }x\in S,
\end{equation}
and
\begin{equation}
\label{eeeeeeeeeeeeeeeeeeeeeee3}
\lim_{|n|\to\infty\atop_{n\in\Z}}f(x+n)=0,
\end{equation}
$$f(x+1)(A+Be(\alpha x))=f(x)(C+De(\beta x)),$$
for a.e. $x$, for some fixed $A,B,C,D\in\C$, $\alpha,\beta\in\R$, none of them zero. Let as before
$$P(x)=A+Be(\alpha x),\;\;\;Q(x)=C+De(\beta x).$$Assume also that $\beta=\frac{p}{r}$ is rational.
We can trivially assume $A$ is real and (by removing a countable set if necessary) that $S+\Z$ contains no zeros of $P$ and $Q$.

Let $B=re(\theta)$, with $r>0$. By invoking Birkhoff's pointwise ergodic theorem for the function $1_S$, there exists $x\in S$ and some $n'\in \N$ large enough such that $z:=\{-x-\frac{2\theta}{\alpha}+n'\alpha^{-1}\}\in S$ and such that $m:=[-x-\frac{2\theta}{\alpha}+n'\alpha^{-1}]>0$. Let $y:=-x-\frac{2\theta}{\alpha}+n'\alpha^{-1}$. We point out that $x,z,m$ are fixed for the rest of the argument, and thus they can be thought of as being fundamental parameters. The point of this selection is that for each $n\in\Z$,
$A+Be(\alpha y-n\alpha )$ and $A+Be(\alpha x+n\alpha)$ are complex conjugates
and thus, for each $N$
$$\prod_{n=-N+m}^{m}|P(z+n)|=\prod_{n=0}^{N}|P(x+n)|.$$
It follows that
\begin{equation}
\label{eeeeeeeeeeeeeeeeeeeeeee1}
\prod_{-N+m}^{n=-1}|P(z+n)|\gtrsim_{m,A,B}\prod_{n=0}^{N}|P(x+n)|
\end{equation}
Define $T(x)=|Q(x)Q(x+1)\ldots Q(x+r-1)|$. We distinguish two cases.

If $|T(x)|\ge |T(z)|$, then since $Q$ is $r$- periodic and $\inf_{k\in \Z}|Q(z+k)|>0$, we have
\begin{equation}
\label{eeeeeeeeeeeeeeeeeeeeeee4}
\prod_{n=-N+m}^{-1}|Q(z+n)|\lesssim_{m,x,z,C,D} \prod_{n=0}^{N}|Q(x+n)|.
\end{equation}
Using the fact that for each $N> m$
$$|f(x+N+1)|=|f(x)|\frac{\prod_{n=0}^{N}|Q(x+n)|}{\prod_{n=0}^{N}|P(x+n)|}$$
$$|f(z-N+m)|=|f(z)|\frac{\prod_{-N+m}^{n=-1}|P(z+n)|}{\prod_{-N+m}^{n=-1}|Q(z+n)|},$$
it follows that \eqref{eeeeeeeeeeeeeeeeeeeeeee2}-\eqref{eeeeeeeeeeeeeeeeeeeeeee4} can not hold simultaneously (just let $N\to\infty$).

If $|T(x)|\le |T(z)|$, then since $Q$ is $r$- periodic and $\inf_{k\in \Z}|Q(z+k)|>0$, we have

$$\prod_{n=-N+m}^{-1}|Q(x+n)|\lesssim_{m,C,D,x,z} \prod_{n=0}^{N}|Q(z+n)|.$$
Note also that as before,
$$\prod_{-N+m}^{n=-1}|P(x+n)|\gtrsim_{m,A,B,x,z}\prod_{n=0}^{N}|P(z+n)|.$$
Using the fact that for each $N\ge m$
$$|f(z+N+1)|=|f(z)|\frac{\prod_{n=0}^{N}|Q(z+n)|}{\prod_{n=0}^{N}|P(z+n)|}$$
$$|f(x-N+m)|=|f(x)|\frac{\prod_{-N+m}^{n=-1}|P(x+n)|}{\prod_{-N+m}^{n=-1}|Q(x+n)|},$$
the contradiction is forced like in the previous case.

\section{Open questions}
The argument from Theorem \ref{t:main}  seems to be too weak to tackle $(1,4)$ configurations like $(0,0),(1,0), (1,\alpha), (1,\beta), (1,\gamma)$. This is because the best one can guarantee in general is the existence of arbitrarily large $P$ such that $\max\{\|P\alpha\|,\|P\beta\|,\|P\gamma\|\}\lesssim \frac{1}{\sqrt{P}}$. It is not clear whether working with 3 or more orbits would have more to say about this case.

One can wonder if continuity can be removed from the proof of Theorem \ref{t:main} for special (2,2) configurations. If yes, Conjecture \ref{cc1} would follow right away for arbitrary (2,2) configurations. If no continuity is assumed, then the following is a typical worst case scenario
\begin{equation}
\label{eeeggtjjjjslllllllffffffff}
f(x+1)(1+e(\alpha x))=f(x)(1+e(\beta x)),
\end{equation}
with $\alpha,\beta$ distinct irrationals. The almost periodicity argument is ineffective in this case. Indeed, it is easy to see that
\begin{equation}
\label{yweytcn90r8t5t[hlpl;'lh;.;h,n}
\lim_{N\to\infty}\inf_{x\in [0,1]}\frac{1}{N}\sum_{n=1}^N\frac{1}{|1-e(x+\alpha n)|}=\infty.
\end{equation}
To see this, recall that by Dirichlet's theorem, for each $N$ there exists $p_N\in\N$ and $1\le m_N\le N$ relatively prime, such that
$$|\alpha-\frac{p_N}{m_N}|\le \frac{1}{m_NN}.$$
For each $i\in\{0,1,\ldots,m_N-1\}$, roughly $N/m_N$ of the points $1\le n\le N$ satisfy
$$\|\alpha n-\frac{i}{m_N}\|\le \frac{1}{m_N}.$$
Thus, for each $x\in [0,1]$
$$\frac{1}{N}\sum_{n=1}^N\frac{1}{|1-e(x+\alpha n)|}\gtrsim \sum_{i=1}^{m_N}\frac{1}{i}\sim \log m_N.$$
Finally, note that the irrationality of $\alpha$ forces $m_N\to\infty$.

It would be very interesting to know if given $\alpha\not=\beta$ the recurrence \eqref{eeeggtjjjjslllllllffffffff} can even hold along the orbit of a single point $x_0\in [0,1]$, in such a way that $\lim_{|N|\to\infty}f(x_0+N)=0$.

\section*{Acknowledgements}
We would like to thank Christoph Thiele  for stimulating our initial interest on the HRT conjecture. Special thanks to Zubin Gautam, Nets Katz, Joseph Rosenblatt and Alexandru Zaharescu for helpful discussions on the subject. The author is also indebted to Kasso Okoudjou for spotting a few inaccuracies in the proof from Section 3.


\begin{thebibliography}{99}
\bibitem{BS} Marcin Bownik and Darrin Speegle, {\em Linear independence of Parseval wavelets}, preprint (2009)
\bibitem{DG} Ciprian Demeter, Zubin Gautam, {\em On the finite linear independence of lattice Gabor systems}, preprint (2010)
\bibitem{HRT} Christopher Heil,  Jayakumar Ramanathan and  Pankaj Topiwala {\em  Linear
independence of time-frequency translates},  Proc. Amer. Math. Soc.
\textbf{124}  (1996),  no. 9, 2787-2795
\bibitem{H} Christopher Heil {\em Linear independence of finite Gabor systems.
Harmonic analysis and applications},  171-206, Appl. Numer. Harmon.
Anal., Birkhäuser Boston, Boston, MA, 2006.
\bibitem{Lin} Peter A. Linnell {\em von Neumann algebras and linear independence of
translates},  Proc. Amer. Math. Soc.  127  (1999),  no. 11,
3269--3277
\bibitem{Nie} Ivan Niven and Herbert S. Zuckerman, {\em An introduction to the theory of numbers}, Second edition


\end{thebibliography}
\end{document}